\newcommand{\R}{{\mathbb{R}}}
\newcommand{\E}{{\mathbb{E}}}
\newcommand{\pr}{{\mathbb{P}}}
\newcommand{\G}{{\mathbb{G}}}
\newcommand{\T}{{\mathbb{T}}}
\newcommand{\C}{{\mathbb{C}}}
\newcommand{\tp}{$(\T,P)\textit{-walk on } G$ }
\newcommand{\tpns}{$(\T,P)\textit{-walk on } G$}
\newcommand{\atb}{\textsc{a-tree-bootstrap} }
\newcommand{\achb}{\textsc{a-chain-bootstrap} }
\newcommand{\atbns}{\textsc{a-tree-bootstrap}}
\newcommand{\achbns}{\textsc{a-chain-bootstrap}}
\newcommand{\utb}{\textsc{u-tree-bootstrap} }
\newcommand{\utbns}{\textsc{u-tree-bootstrap}}
\newcommand{\ssb}{\textsc{ss-bootstrap} }
\newcommand{\ssbns}{\textsc{ss-bootstrap}}
\newcommand{\gn}{N}
\newcommand{\tn}{n}
\newcommand{\D}{{\mathscr{D}}}
\newcommand{\F}{{\mathscr{F}}}
\newcommand{\A}{{\mathscr{A}}}
\newcommand{\cb}{\underline{c}}
\newcommand{\ct}{\bar{c}}
\newtheorem{lemma}{Lemma}[section]
\newtheorem{theorem}{Theorem}[section]
\newtheorem{definition}{Definition}
\newtheorem{corollary}{Corollary}[section]
\newtheorem{prop}{Proposition}[section]
\newtheorem{remark}{Remark}[section]
\newtheorem{fact}{Fact}[section]
\title{Network driven sampling; a critical threshold for design effects}
\author{Karl Rohe\footnote{Thank you Zoe Russek and Emma Krauska for recording the referral trees used in Figure \ref{fig:bothTrees} and helpful comments on this draft.  Thank you Bret Hanlon, Mohammad Khabbazian, Matthew Salganik, Mark Handcock, Sebastien Roch, Quansheng Liu, Ting Fung Ma, Arash Amini, Erik Volz, and Russell Lyons for thoughtful and helpful discussions over the course of this research. This research is supported by NSF grant DMS-1309998, DMS-1612456, and ARO grant W911NF-15- 1-0423.}, \\ University of Wisconsin - Madison}
\begin{document}
\maketitle

\begin{abstract}

Web crawling, snowball sampling, and respondent-driven sampling (RDS) are three types of network sampling techniques used to contact individuals in hard-to-reach populations. This paper studies these procedures as a Markov process on the social network that is indexed by a tree. Each node in this tree corresponds to an observation and each edge in the tree corresponds to a referral. Indexing with a tree (instead of a chain) allows for the sampled units to refer multiple future units into the sample.  

In survey sampling, the design effect characterizes the additional variance induced by a novel sampling strategy. If the design effect is some value $DE$, then constructing an estimator from the novel design makes the variance of the estimator $DE$ times greater than it would be under a simple random sample with the same sample size $n$. Under certain assumptions on the referral tree, the design effect of network sampling has a critical threshold that is a function of the referral rate $m$ and the clustering structure in the social network, represented by the second eigenvalue of the Markov transition matrix, $\lambda_2$. If $m < 1/\lambda_2^2$, then the design effect is finite (i.e. the standard estimator is $\sqrt{n}$-consistent). However, if $m > 1/\lambda_2^2$, then the design effect grows with $n$ (i.e. the standard estimator is no longer $\sqrt{n}$-consistent). Past this critical threshold, the standard error of the estimator converges at the slower rate of $n^{\log_m \lambda_2}$.  The Markov model allows for nodes to be resampled; computational results show that the findings hold in without-replacement sampling.
To estimate confidence intervals that adapt to the correct level of uncertainty, a novel resampling procedure is proposed.  Computational experiments compare this procedure to previous techniques.

\end{abstract}
%\begin{keyword}[class=MSC]
%\kwd[Primary ]{62D99}
%\kwd[; secondary ]{60J20}
%\end{keyword}
%
%\begin{keyword}
%\kwd{Stochastic Blockmodel}
%\kwd{social network}
%\kwd{link-tracing}
%\kwd{Galton-Watson}
%\end{keyword}

%\end{frontmatter}

\section*{Introduction}

This paper is motivated by respondent-driven sampling (RDS), a popular technique to sample marginalized and/or hard-to-reach populations \citep{heckathorn1997respondent}. 
RDS has become particularly popular in HIV research because the populations most at risk for HIV (i.e. people who inject drugs, female sex workers, and men who have sex with men) cannot be sampled using conventional techniques.  Several  domestic and international institutions use RDS to quantify the prevalence of HIV in at risk populations, including the Centers for Disease Control and Prevention  (CDC), the World Health Organization (WHO), and the Joint United Nations Programme on HIV/AIDS (UNAIDS) \citep{WHO}.  It has been applied in over 460 different studies, in 69 different countries \citep{white2015strengthening}. 

The RDS  process starts with a convenience sample of ``seeds" from the target population.  
They form wave zero.  
These participants are incentivized to (1) participate in the study and (2) pass three (or sometimes up to five) referral coupons to their friends.  The friends that return to the study site with a coupon form the first wave of the RDS.
The process iterates until the procedure reaches the target sample size, or until the process dies because participants stop passing coupons.  Figure \ref{fig:graphSeq} gives an illustration of this process.

\begin{figure}[t] %  figure placement: here, top, bottom, or page
   \centering
   \includegraphics[width=6in]{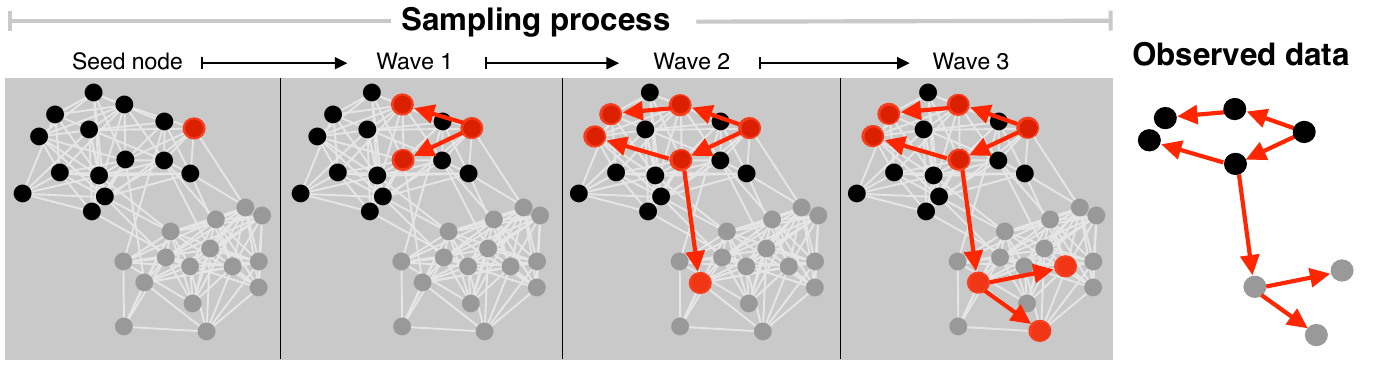} 
   \caption{Network sampling has two graphs: the underlying social network and the referral tree. Each node in the social network has some feature (e.g. HIV status).  In this diagram, the node feature is denoted by color.  When we sample a node, we observe (i) the node's color and (ii) which node referred the node into the sample.  In the end, we want to estimate the proportion of nodes that are grey.}
   \label{fig:graphSeq}
\end{figure}

If we presume that each participant refers a random subset of their friends,\footnote{In current implementations of RDS, randomization is not produced by researchers. Rather, it is presumed that people refer friends randomly.  The validity of such assumptions has been studied in several several ways in empirical and statistical papers.   For example, \cite{gile2015diagnostics} proposed statistical diagnostics to examine the convergence properties; \cite{arayasirikul2015qualitative} performed qualitative follow-up interviews to ask participants about difficulties in finding referrals; and \cite{mccreesh2012evaluation} compared a respondent-driven sample in Uganda with a total population survey on the same population.
} 
then RDS is a stochastic process on the members of the social network.
In the RDS literature, it is common to assume a Markov model because it is analytically tractable.  
The Markovian assumption is knowingly incorrect in practice.  
For example, it samples with-replacement; in practice, the sampling is performed without-replacement.  
Simulation studies suggest that, when the sample size is much smaller than the population size, the Markov model provides an approximation to more accurate simulation models  \citep{lu2012sensitivity}.
Under the Markov model, \cite{salganik2004sampling} and  \cite{volz2008probability} construct unbiased estimators.  While they are unbiased, they often suffer from high variability \citep{goel2009respondent, goel2010assessing}.  
In particular, \cite{goel2010assessing} shows in a wide range of computer experiments that (1) RDS
%\footnote
%{
%The acronym RDS is used to denote both (i) sampling and (ii) a sample.
%}  
often produces estimators with exceedingly large variance and (2) the popular bootstrap technique in \cite{salg} produced nominal $95\%$ confidence intervals with coverage probabilities between  $40\%$ and $70\%$.  
%These inadequacies have been further observed in \cite{neely2009statistical},  \cite{verdery2013network}, and \cite{baraff2016estimating}.  
This paper aims to build on these earlier results to provide a rigorous description of the inadequacies.

This paper focuses on one particular assumption of the Markov chain model which has received insufficient scrutiny.  In practice, each participant can refer between zero and three (sometimes up to five) future participants.  However, in the Markov chain model, each participant refers exactly one individual.  In the previous simulation study of \cite{goel2010assessing}, the ``chain" assumption was relaxed, while the ``Markov" assumption was retained.  This model has drastically different behaviors.  The results below show that this ``Markov tree" model remains analytically tractable.

The paper is organized as follows.  Section \ref{sec:prelim} defines the Markov model, the quantity we wish to estimate, and the estimators.  Section \ref{sec:var} provides an exact formula for the variance of an RDS estimator in Theorem \ref{thm:var}.
Section \ref{sec:critical} specifies the asymptotic behavior of the design effect in Theorem \ref{thm:critical}.
Section \ref{sec:replacement} studies the rate at which the Markov model resamples nodes in Theorem \ref{thm:upperRn}.  
With Theorems \ref{thm:var} and \ref{thm:critical}, Section \ref{sec:reinterpreting} reinterprets the previous simulation results in \cite{goel2010assessing}. Section \ref{sec:bootstrap} proposes a novel resampling technique \atb and compares it to previous techniques in computational experiments.  Finally, Section \ref{sec:discuss} concludes the paper.  All proofs are contained in the appendix.

\section{Preliminaries} \label{sec:prelim}

The model described below is a straightforward combination of the Markov models developed in the previous literature (e.g. \cite{heckathorn1997respondent, salganik2004sampling, volz2008probability} and \cite{goel2009respondent}). 
There are four necessary mathematical pieces:
a social network represented as a graph, a Markov transition matrix on the nodes of the graph, a referral tree to index the Markov process on the graph, and finally, a node feature defined for each node in the graph.

\subsection{Markov processes on a graph} \label{sec:graph}
%A node set  the edge set  contains the friendships. Together, they form 
A social network $G = (V,E)$ consists of the set of people $V = \{1, \dots, N\}$ and the set of friendships $E = \{(i,j) : \mbox{ $i$ and $j$ are friends}\}$.  $V$ is referred to as the node set and $E$ is referred to as the edge set.  
The results in this paper allow for  a weighted graph. Let $w_{ij}$ be the weight of the edge $(i,j) \in E$; if $(i,j) \not \in E$, define $w_{ij} = 0$. If the graph is unweighted, then let $w_{ij} = 1$ for all $(i,j) \in E$.  Throughout this paper, the graph is undirected, $w_{ij} = w_{ji}$; for all pairs $i,j$.   Define the degree of node $i$ as 
$deg(i) = \sum_j w_{ij}$ and the volume of the graph as $vol(G) = \sum_i deg(i)$. If the graph is unweighted, $deg(i)$ is the number of connections to node $i$. To simplify notation, $i \in G$ is used synonymously  with $i \in V$.

\subsubsection{Markov chain on $G$}   
Denote $X(0), X(1), X(2) \dots  \in G$ as a Markov chain on the individuals from the social network $G$.  The transition matrix $P \in \R^{\gn \times \gn}$ is defined so that transition probabilities are proportional to edge weights,
\[P_{ij} = \pr\left( X(t+1) = j | X(t) = i\right) = \frac{w_{ij} }{ deg(i)}.\]
Let $|\lambda_1| \ge |\lambda_2| \ge \dots \ge |\lambda_N|$ denote the eigenvalues of $P$.   All eigenvalues of $P$ are less than or equal to one in absolute value (see e.g. Lemma 12.1 in \cite{levin2009markov}).   Because the edge weights are symmetric, $w_{ij} = w_{ji}$ for all $i,j$, the Markov chain is \textbf{reversible}.   If $|\lambda_2|<1$, then the stationary distribution $\pi: G \rightarrow \R$ is
\[\pi_j = \lim_{t \rightarrow \infty} \pr( X(t) = j |  X(0) = i)  = \frac{deg(j)}{vol(G)} \  \mbox{ for all $i,j \in G$}.\]

%If each RDS participant refers exactly one future participant, then the process could be indexed by a chain.  To allow each participant to have multiple referrals, the process should be indexed by a tree.

%The second eigenvector $f_2$ plays a critical role in the remainder of the paper.  Each of the leading eigenvectors represents a bottleneck in the referral process.
%In the previous example with two communities EAST and WEST, suppose that $P$ corresponds to the simple random walk. If EAST and WEST correspond to the most dominant partition in the network, then $f_2(i)$ and $f_2(j)$ will have the same sign (i.e. +/-) if and only if $i$ and $j$ belong to the same community. By looking at the signs, $f_2$ partitions the graph into EAST and WEST.  
%The Cheeger bound \citep{chung1997spectral} makes this concept rigorous and provides an argument for why spectral clustering can partition a graph \citep{von2007tutorial}.  

\subsubsection{Markov process on $G$ indexed by a tree}

%Another graph beyond the social network $G$ is needed to index the network driven sample.  While $G$ contains the entire population, the second graph contains the sampled nodes.  For example, under a Markov chain, this second graph contains the nodes $0, 1, 2, \dots, n$ and the edges $t-1 \rightarrow t$ for $t = 1, \dots, n$.  In this second graph, an edge corresponds to a referral in the sampling process.  Because some participants provide  multiple referrals and other participants provide zero referrals, this rest of the paper indexes the Markov process with a tree. 

Let $\T$ be a rooted tree--a connected graph with $n$ nodes, no cycles, and a vertex $0$.  The seed participant is vertex $0$ in $\T$ (cf Figure \ref{fig:graphSeq}).  Note that the node set of $G$ indexes the population and the node set of $\T$ indexes the sample.  To simplify notation, $\sigma \in \T$ is used synonymously with $\sigma$ belonging to the vertex set of $\T$.  For any node in the tree $\sigma \in \T$,  denote $\sigma' \in \T$ as the parent of $\sigma$ (the node one step closer to the root). Let $\D(\sigma) \subset  \T$ denote the set of $\sigma$ and all its descendants   in $\T$.   Denote the height of $\T$ as $h(\T)$; this is the number of rounds of sampling in the RDS, or the maximum graph distance in $\T$ from the root to any node.

A Markov process indexed by $\T$ is a set of random variables $\{X_\sigma : \sigma \in \T\}$ satisfying the Markov property 
\[\pr(X_\sigma | X_{\sigma'}, X_\tau : \tau \in \D(\sigma)^c) = \pr(X_\sigma | X_{\sigma'}).\]

The transition matrix $P \in [0,1]^{\gn \times \gn}$ describes these transition  probabilities,
\[\pr(X_\sigma = j | X_{\sigma'} = i) = P_{ij}, \mbox{ for } i,j \in G.\]
\cite{benjamini1994markov} called this process a \tpns.   Unless stated otherwise, it will be presumed throughout that under the \tpns,  $X_0$ is initialized from the stationary distribution of $P$.

%While $G$ contains the entire population, $\T$ indexes the sampled participants.  $\gn$ is the size of the total population and $\tn$ is the sample size.  For example, if $\tilde \T$ contains the nodes $0, 1, 2, \dots, n$ and the edges $t \rightarrow t+1$, then the $(\tilde \T, P)$-\textit{walk on} $G$ is a standard Markov chain, $X(0), X(1), \dots, X(n)$; the node $j \in \tilde \T$ corresponds to the $j$th person sampled, while the particular individual $X(j) \in G$ is random.  
%In the social network $G$, an edge represents friendship.  In the tree, an edge from $\tau \in \T$ to $\sigma \in \T$ represents that random individual $X_{\tau} \in G$ refers random individual $X_{\sigma} \in G$ in the \tpns.  

For example, if $\C$ is the chain graph, then the $(\C, P)\textit{-walk}$ on $G$ is a Markov chain on $G$, $X(0), X(1), X(2), \dots  \in G$.  One key property of the Markov model is that it allows for resampling.  Said another way, it ``samples with-replacement''  because it is possible for $X(i) = X(j)$ for $i \ne j$.  The same is true in the tree model.  In particular, it is possible for $X_\tau = X_\sigma$ for $\tau, \sigma  \in \T$ with $\tau \ne \sigma$.

\subsection{Measurements and estimators} \label{sec:measurements}
For each node $i \in G$, let $y(i) \in \R$ denote some characteristic of this node.  We wish to estimate the population average 
\[\mu_{true} = \frac{1}{\gn} \sum_{i \in G}  y(i).\]
In the motivating RDS example, $y(i) = 1$ denotes that $i \in G$ is HIV+, $y(i) = 0$ denotes that $i \in G$ is HIV-, and $\mu_{true}$ is the proportion of the population that is HIV+.  We estimate $\mu_{true}$ with observations
\[Y_\tau = y(X_\tau) \ \mbox{ for } \tau \in \T,\] 
where $X_\tau$ is a \tpns.  Denote
\[\mu = \E_{RDS}(Y_0) = \sum_i y(i) \pi_i,\]
where the subscript  $_{RDS}$ denotes that the expectation is computed with the \tpns.  In general, $\mu \ne \mu_{true}$.
The sample average,
\begin{equation}\label{eq:avg}
\hat \mu = \frac{1}{n} \sum_{\tau \in \T} Y_\tau
\end{equation}
is an unbiased estimate of $\mu$.

With $\pi_i = deg(i)/vol(G)$,
the inverse probability weighted estimator (IPW),
\[\hat \mu_{IPW} 
= \frac{1}{n} \sum_{\tau \in \T} \frac{Y_\tau }{\pi_{X_\tau}N}
= \frac{vol(G)}{N} \frac{1}{n}   \sum_{\tau \in \T} \frac{Y_\tau }{deg(X_\tau)},\]
is an unbiased estimator for $\mu_{true}$. The results in this paper can be applied to $\hat \mu_{IPW}$ via a transformation that is described in the next remark.  Computing the IPW estimator requires $vol(G)$ or the average node degree $vol(G)/N$.  This is typically not available in practice.  
When the sampling weights can be identified up to a constant of proportionality (i.e. $\pi_i \propto deg(i)$), estimating $vol(G)/N$ with the harmonic mean of the observed node degrees,
\[H = \left(n^{-1} \sum_{\tau \in \T} 1/ deg(X_{\tau})\right)^{-1},\]  
leads the Hajek or Volz-Heckathorn estimator \citep{volz2008probability},
\[\hat \mu_{VH} = H \frac{1}{n} \sum_{\tau \in \T} \frac{Y_\tau}{deg(X_\tau)}.\]

\begin{remark} \label{remark:vh}
Define a new node feature 
\[y^\pi(i) = \frac{y(i)}{\pi_iN}\] 
and new node measurements $Y^\pi_\tau = y^\pi(X_\tau)$.  The sample average of the $Y^\pi_\tau$'s is exactly the IPW estimator using the non-transformed $Y_\tau$'s.  Because of this simple transformation, the theorems below that study $\hat \mu$ can also study $\hat \mu_{IPW}$ by substituting $y^\pi$ for $y$.
\end{remark}

Define $W_1, \dots, W_n \in G$ as independent random samples with $\pr(W_i = j) = \pi_j$.  Define
\begin{equation} 
\label{eq:varrs} Var_{\pi} (\hat \mu) = Var\left(\frac{1}{n} \sum_{i = 1}^n y(W_i)\right). 
\end{equation}
Define the \textbf{design effect} of the \tp as
\begin{equation}\label{eq:de}
DE(\hat \mu) = \frac{Var_{RDS}(\hat \mu) }{  Var_{\pi}(\hat \mu)}.
\end{equation}
The standard definition of $DE$ contains the variance under simple random sampling (SRS) in the denominator.  For simplicity, the $DE$ in this paper contains $Var_\pi$ in the denominator.  The key difficulty of comparing SRS to the \tp is that SRS is without-replacement.  
%Note that 
%In our case, 
%It is unfair to compare the \tp to simple random sampling because simple random sampling is without-replacement.  I
%Define $Var_u(\hat \mu)$ to be . 
Instead of SRS, the denominator in Equation \eqref{eq:de} could be replaced by the variance under uniform sampling (with-replacement) and this would only change the $DE$  by a constant factor.  This is because $G$ and $\pi$ do not change with $n$.

The standard O-notation is used below.  In particular, $h(\tn) = o(g(\tn))$ means that $h(\tn)/g(\tn) \rightarrow 0$ as $n \rightarrow \infty$ and 
$h(\tn) =O(g(\tn))$ means that $h(\tn) \le M g(\tn)$ for all $\tn$, for some constant $M$. 

\section{The variance under RDS} \label{sec:var}

The key result of this section, Theorem \ref{thm:var}, expresses $Var_{RDS}(\hat \mu)$ as a function of the eigen-properties of $P$.  
The following lemma from \cite{levin2009markov} provides the eigendecomposition of the matrix $P$. 
\begin{lemma} (Lemma 12.2 in \cite{levin2009markov}) \label{lem:tmp}
Let $P$ be a reversible Markov transition matrix on the nodes in $G$ with respect to the stationary distribution $\pi$.  The eigenvectors of $P$, denoted as $f_1, \dots, f_{\gn}$, are real valued functions of the nodes $i \in G$ and orthonormal with respect to the inner product 
\begin{equation} \label{def:inner}
\langle f_a, f_b \rangle_\pi = \sum_{i \in G} f_a(i) f_b(i) \pi_i.
\end{equation}
If $\lambda$ is an eigenvalue of $P$, then $|\lambda|\le 1$.  The eigenfunction $f_1$ corresponding to the eigenvalue $1$ can be taken to be the constant vector $\textbf{1}$.
\end{lemma}

All of the statements in this section are conditional on the tree.  This tree appears in the formula for the variance through the functional $\G$, defined as follows.

%In the various forms of network sampling, $\T$ is observed and can be used for the data analysis.  In this section, condition on the tree.  If the tree $\T$ change from one experiment to the next.   So, The next section studies how $\G$ depends on only a simple summary of $\T$ and this allows for the study of random trees.
%
%
%Theorem \ref{thm:var} below shows that the variance of $\hat \mu$ changes with the tree.  A similar phenomenon happens in linear regression, where (1)  the design matrix controls the covariance of the estimator and (2) the design matrix may change if the experiment were repeated.  Because of this, the standard estimators of the covariance in linear regression \textit{condition} on the design. One argument for conditioning is formalized by the conditionality principal in \citep{birnbaum1962foundations}.  
%In an analogous fashion, Theorem \ref{thm:var} conditions on the tree $\T$;  

%The corollary give upper and lower bounds that depend on $\G(\lambda_2)$, where $\lambda_2$ is the second eigenvalue of $P$ and $\G$ is defined as follows.

\begin{definition} \label{def:probgen}
Select two nodes $I, J \in \T$ uniformly and independently.  Define  $D = d(I,J)$ to be the graph distance between $I$ and $J$ in $\T$.  Define $\G$ as the probability generating function for $D$,
\[\G(\lambda) = \E( \lambda^{D}).\]
\end{definition}
%Because $\T$ is observed, the function $\G$ can be computed (see Figure \ref{fig:bothTrees}).  
%In many studies there are multiple seed nodes.  In these cases, $\T$ is a ``forest'' and $\G$ can be computed by setting $d(I,J) = \infty$ if $I$ and $J$ are in different connected components of $\T$.  

Note that because the tree $\T$ is observed, the function $\G$ can be computed in practice.  Figure \ref{fig:bothTrees} gives an illustration of $n \G(\lambda)$ for $\lambda \in [0,1]$, where $n$ is the number of nodes in $\T$.

\begin{theorem} \label{thm:var}
Suppose that the Markov transition matrix $P$ is reversible with respect to $\pi$ and that the second eigenvalue of $P$ is less than one in absolute value, then 
\begin{equation} \label{eq:var}
Var_{RDS}(\hat \mu) \ = \  \sum_{\ell=2}^{\gn} \langle y, f_\ell \rangle_\pi^2 \G(\lambda_\ell),  
\end{equation}
where the subscript $_{RDS}$ denotes that data have been collected through a \tpns, $\hat \mu$ is defined in Equation \eqref{eq:avg}, $\langle \cdot, \cdot \rangle_\pi$ is defined in Equation \eqref{def:inner}, $f_1, \dots, f_\gn: G \rightarrow \R$  are the eigenvectors of $P$ corresponding to eigenvalues $\lambda_1 > |\lambda_2| \ge  \dots \ge |\lambda_\gn|$, and $\G$ is defined in Definition \ref{def:probgen}. 
\end{theorem}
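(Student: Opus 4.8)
The plan is to expand the variance into a double sum of covariances, $Var_{RDS}(\hat\mu) = n^{-2}\sum_{\sigma,\tau\in\T} Cov(Y_\sigma,Y_\tau)$, show that each covariance depends on the pair $(\sigma,\tau)$ only through the tree distance $d(\sigma,\tau)$ via a spectral formula, and then recognize the resulting average over pairs as $\G$ evaluated at the eigenvalues. Since $\gn$ is finite every sum in sight is finite, so the only real content is the pairwise-law computation.

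First I would record the one- and two-point marginals of the $(\T,P)$-walk. Because $X_0$ is drawn from $\pi$ and $\pi P = \pi$, an induction down any root-to-vertex path gives $X_\sigma \sim \pi$ for every $\sigma\in\T$; in particular $\E Y_\sigma = \mu$ for all $\sigma$. For a pair $\sigma,\tau$, let $a\in\T$ be their last common ancestor and put $d_1 = d(\sigma,a)$, $d_2 = d(\tau,a)$, so $d_1+d_2 = d(\sigma,\tau)$. The tree Markov property — equivalently, the product factorization of the joint law of $\{X_\sigma\}$ as $\pi_{X_0}$ times a product of one-step transition probabilities along the edges — implies that $a$ separates $\sigma$ from $\tau$, hence conditionally on $X_a$ the variables $X_\sigma$ and $X_\tau$ are independent, with $X_\sigma\mid X_a=k$ distributed as $P^{d_1}_{k,\cdot}$ and $X_\tau\mid X_a=k$ as $P^{d_2}_{k,\cdot}$. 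Therefore $\pr(X_\sigma=i,X_\tau=j) = \sum_k \pi_k P^{d_1}_{k,i}P^{d_2}_{k,j}$, and reversibility ($\pi_k P^{d_1}_{k,i} = \pi_i P^{d_1}_{i,k}$) collapses this to $\pi_i P^{d}_{i,j}$ with $d = d(\sigma,\tau)$; that is, $(X_\sigma,X_\tau)$ has the same joint law as the two endpoints of an ordinary $P$-chain of length $d$.

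Next I would insert the spectral decomposition of $P^{d}$ from Lemma \ref{lem:spec}, namely $P^{d}_{i,j} = \pi_j + \pi_j\sum_{\ell=2}^{\gn}\lambda_\ell^{d} f_\ell(i)f_\ell(j)$, into $\E[Y_\sigma Y_\tau] = \sum_{i,j} y(i)y(j)\pi_i P^{d}_{i,j}$. The $\ell=1$ piece contributes $\big(\sum_i y(i)\pi_i\big)^2 = \mu^2$, which cancels against $\E[Y_\sigma]\E[Y_\tau] = \mu^2$, while each $\ell\ge 2$ piece factors through the $\pi$-inner product, giving $Cov(Y_\sigma,Y_\tau) = \sum_{\ell=2}^{\gn}\lambda_\ell^{d(\sigma,\tau)}\langle y,f_\ell\rangle_\pi^2$ (consistent at $d=0$ with $Var_{RDS}(Y_0) = \langle y,y\rangle_\pi - \mu^2$). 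Summing over all $n^2$ ordered pairs, dividing by $n^2$, and swapping the two finite sums, the inner average $n^{-2}\sum_{\sigma,\tau\in\T}\lambda_\ell^{d(\sigma,\tau)}$ is exactly $\E\,\lambda_\ell^{D}$ for $D = d(I,J)$ with $I,J$ independent and uniform on the vertex set of $\T$ — which is $\G(\lambda_\ell)$ of Definition \ref{def:probgen}. This gives \eqref{eq:var}.

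The main obstacle is the pairwise-law step: carefully justifying the conditional independence of $X_\sigma$ and $X_\tau$ given the state at their common ancestor (from the directed tree Markov property and the factorization it yields) and then performing the reversibility "folding" that turns a two-branch sum into the single power $P^{d(\sigma,\tau)}$. Once that identity is in hand the rest is bookkeeping with finite sums; the hypotheses that $P$ is reversible and $|\lambda_2|<1$ enter only to make Lemma \ref{lem:spec} applicable and — in the case of reversibility — to make the folding valid in the first place.
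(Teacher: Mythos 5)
Your proposal is correct and follows essentially the same route as the paper: reduce the variance to a double sum of covariances, show via the common-ancestor factorization and reversibility that $(X_\sigma,X_\tau)$ has the law of the endpoints of a $d(\sigma,\tau)$-step chain (this is the paper's Lemma \ref{lem:rev}), insert the spectral decomposition from Lemma \ref{lem:spec} so that the $\ell=1$ term cancels the product of means, and exchange the finite sums to identify $n^{-2}\sum_{\sigma,\tau}\lambda_\ell^{d(\sigma,\tau)}$ with $\G(\lambda_\ell)$. The reversibility ``folding'' step $\sum_k \pi_k P^{d_1}_{k,i}P^{d_2}_{k,j} = \pi_i P^{d_1+d_2}_{i,j}$ is exactly the paper's key lemma, so no gap remains.
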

In previous research, \cite{verdery2013network} and \cite{acrds}  prove this theorem for the special case that $\T$ is a chain.   
%Theorem \ref{thm:var} finds a closed form expression for the variance of $\hat \mu$ for any referral tree $\T$ by decomposing $y$ with the eigenbasis for the Markov transition matrix $P$.  In an asymptotic setting where the number of samples is growing, the coefficient $\langle y, f_\ell \rangle_\pi^2$ and the eigenvalues $\lambda_\ell$ remain unchanged; it is the function $\G$ that changes with $n$.  
The first step to prove Theorem \ref{thm:var} is to show that if $d(\sigma, \tau) = t$, then by the reversibility of $P$,
\[(X_\sigma, X_\tau) \stackrel{d}{=} (X(0), X(t)),\]
where $X(0), \dots, X(t) \in G$ is a Markov chain with the same transition matrix $P$.  Then, expanding $y$ in the eigenbasis from Lemma \ref{lem:tmp}, 
\begin{equation}\label{eq:cov}
Cov_{RDS}(Y_\sigma, Y_{\tau})= \sum_{\ell=2}^{\gn} \lambda_\ell^{d(\sigma, \tau)}  \langle y, f_\ell \rangle_\pi^2.
\end{equation}
Averaging over $\sigma, \tau$ and exchanging summations yields $\G$ and the final result.  Section \ref{varProof} in the appendix contains a full proof.

\begin{figure}[h] %  figure placement: here, top, bottom, or page
   \centering
   \includegraphics[width=6.5in]{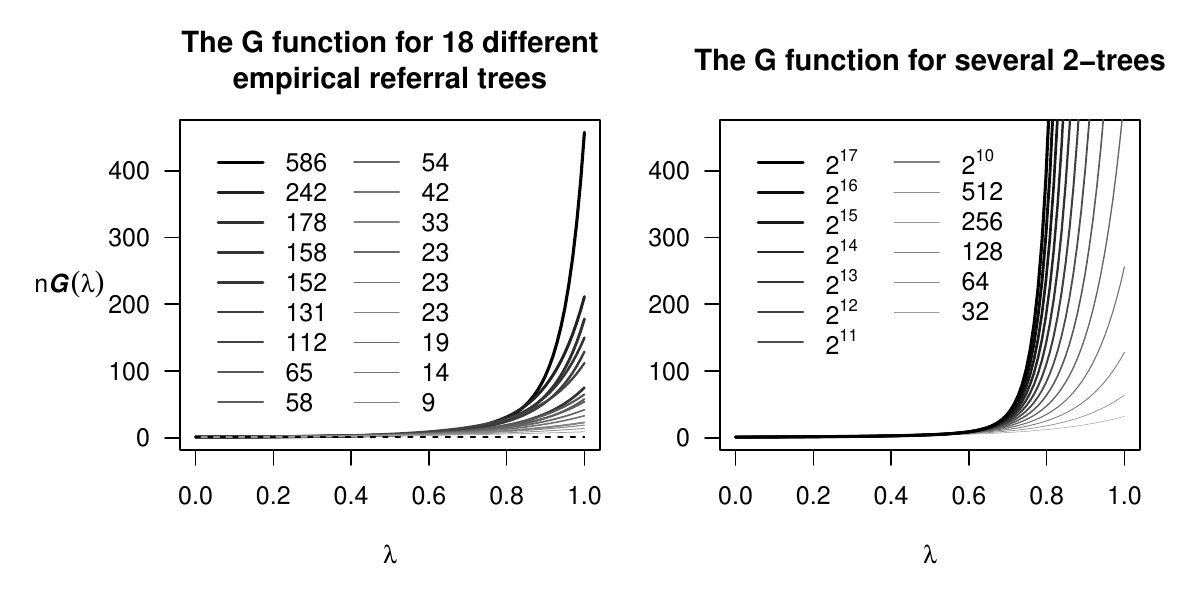} 
   \caption{Each line corresponds to a referral tree.  The vertical axis gives $n \G(\lambda)$.  The legend gives the number of nodes in each tree.   In the left panel, there are eighteen different referral trees from published RDS studies.  The tree of 586 comes from a study of drug users in New York City \citep{abdul2006effectiveness}.  The tree of 112 comes from a study of injection drug users in Connecticut \citep{heckathorn1997respondent}.  The trees of 14, 19, 23, 23,  65,  and 152  come from a study of men who have sex with men in Higuey, Dominican Republic \citep{gile2015diagnostics}.  The remaining ten trees come from a study of 25 villages in rural Uganda \citep{mccreesh2012evaluation}.  
    In the right panel, each line represents a $2$-tree, where each node creates two nodes in the next wave.  
%   The left panel gives   for 18 empirical .  The right panel presents the same results for several $2$-trees.   On the horizontal axis, $\lambda$ represents the strength of a potential referral bottleneck.   
%    All lines are upward sloping.  
%    Moreover, larger trees are more sensitive to larger values of $\lambda$.  
The results of the next section are foreshadowed by the critical threshold at $\lambda = 1/\sqrt{2} \approx .7$.
%As expected, all of the lines are upward sloping, indicating that stronger bottlenecks produce larger design effects.  Moreover, the larger trees are more sensitive to the bottleneck strength.
    }
   \label{fig:bothTrees}
\end{figure}

\begin{remark}\label{remark:seedbias}
Using Remark \ref{remark:vh}, Theorem \ref{thm:var} also gives the variance for $\hat \mu_{IPW}$.  This theorem  presumes that $X_0$ (i.e. the seed node) is sampled from the stationary distribution.  Under this assumption, $\hat \mu_{IPW}$ is unbiased for finite samples.  However, conditionally on the seed node, $\hat \mu_{IPW}$ and $\hat \mu_{VH}$ are biased \citep{gile2010respondent}.  The law of total variance shows how $Var_{RDS}(\hat \mu_{IPW})$ includes the seed-bias, defined as $bias(\hat \mu_{IPW}, X_0) = \E_{RDS}(\hat \mu_{IPW}|X_0) - \mu_{true},$
\begin{eqnarray*}
Var_{RDS}(\hat \mu_{IPW}) 
%= E\left(Var(\hat \mu_{IPW}|X_0)\right) + Var\left(E(\hat \mu_{IPW}|X_0)\right)\\
= \E_\pi\left(Var_{RDS}(\hat \mu_{IPW}|X_0)\right) + \E_\pi\left(bias(\hat \mu_{IPW}, X_0)\right)^2,
\end{eqnarray*}
where $\E_\pi$ is the expectation with respect to $X_0$ having distribution $\pi$.  
\end{remark}

The law of total variance shows that that conditioning on the seed node decreases the variance by the squared seed-bias.

\section{The asymptotic behavior of the design effect} \label{sec:critical}
The eigen-properties of $P$ have been extensively studied in 
the literature on spectral graph theory and spectral clustering \citep{chung1997spectral,von2007tutorial}.
Cheeger's inquality shows that if $\lambda_2$ is close to one, then there are clusters or communities in the graph.  
For RDS, this creates a ``referral bottleneck'' where the referral process has difficulty mixing between the two communities. 
For example, if $\lambda_2=1$, then the social network is disconnected; this represents an extreme  bottleneck, where the referral process will never cross the divide.  This section shows that the asymptotic behavior of $DE$ depends upon the relationship between $\lambda_2$ and the growth rate of the referral tree $\T$.

To study how $Var_{RDS}$ and $DE$ behave as the sample size increases, it is necessary to describe how the referral tree $\T$ grows.  
Theorem \ref{thm:critical} grows a random Galton-Watson tree.  A Galton-Watson tree is initialized with a single root node and is parameterized by its \textbf{offspring distribution}.  Starting with the root node and iterating through all future generations,  each node generates a random number of offspring, drawn from the offspring distribution.  The number of offspring produced by each node is independent across nodes.  This process is highly studied with several well known results (e.g. \cite{athreya1972branching}). 

Let $\xi$ be a generic draw from the offspring distribution  and denote $\E \xi =m$.    To have a positive probability that the tree generates an infinite number of nodes, the results below require that $m>1$.  Denote $\T_h$ as the sub-tree of $\T$ that includes all nodes within distance $h$ of the root.

\begin{theorem} \label{thm:critical}
Suppose $\T$ is a random Galton-Watson tree.  Let $\xi$ be a single draw for the offspring distribution with $m = \E(\xi) >1$ and $\E(\xi^4) < \infty$. Condition on the survival of the Galton-Watson process.  
Define $\T_h$ as the node induced subgraph of $\T$ that contains all nodes $\tau \in \T$  within distance $h$ from the root node.  
Let $P$ be a Markov transition matrix on $G$ that is reversible with respect to its stationary distribution $\pi$. 
Let $\hat \mu_h$ be constructed with the samples from a $(\T_h,P)\textit{-walk on } G$.
If $Var_\pi Y_0 >0$, $\langle y,  f_2 \rangle_\pi^2 > 0$, and $\lambda_2>0$, then  
\begin{equation} \label{eq:critical} \arraycolsep=1.4pt\def\arraystretch{1.4}
%%\G_{h}(\lambda_2) \le 
%\mbox{Design Effect} 
DE(\hat \mu_h) \asymp
\left\{\begin{array}{ll}
 c & \mbox{ if } m \le \beta \\
% n^{-1}& \mbox{ if } m = \beta \\
n^{1-\alpha} & \mbox{ if } m > \beta,
\end{array}\right.
\end{equation}
where $DE$ is defined in Equation \eqref{eq:de} conditionally on $\T$, $\asymp$ is equality up to $(\log n)^2$ terms, $\beta = \lambda_2^{-2}$, and $\alpha = \log_m \lambda_2^{-2}$.  
\end{theorem}

The proof of this result has four pieces, divided into four subsections of Section \ref{sec:ProofCritical} in the appendix. 
Section \ref{sec:PrelimCritical} shows that $DE$ behaves asymptotically similar to $n\G(\lambda_2)$.   Then, Subsection \ref{sec:lowerBound} gives a lower bound for $\G(\lambda_2)$ that depends only on the growth rate of the tree $\T$.  Subsection  \ref{sec:upperBound} gives an upper bound for $\G(\lambda_2)$ that requires a ``balanced assumption'' on $\T$.  These three subsections do not require that $\T$ comes from the Galton-Watson distribution.  Then, in Section \ref{sec:GWP} the Kesten-Stigum Theorem shows that when $\T$ comes from the Galton-Watson distribution, it grows at rate $m$ (satisfying the lower bounds in Section \ref{sec:lowerBound}).  
Then, Lemma \ref{thm:gwp}
applies the $L^p$ maximal inequality for martingales to  the  Galton-Watson martingale to show that Galton-Watson trees with $\E \xi^4 < \infty$ satisfy the ``balanced assumption.''

The assumption that $\E(\xi^4) < \infty$ is a strong assumption in the literature on the Galton-Watson process.  However, there are two important points.  First, in the context of RDS, the offspring distribution is typically bounded by three or five. As such, this condition is certainly satisfied. Second, the finite fourth moment is only needed for the upper bound.  So, if the fourth moment were infinite, then the $DE$ could be much larger.

To see why there is a critical threshold,
note that $Var_{RDS}(\hat \mu)$ is the average of the covariances $Cov_{RDS}(Y_\sigma, Y_\tau)$.  From Equation \eqref{eq:cov} each covariance term decays exponentially, $O(\lambda_2^{d(\sigma, \tau)})$,  where $d(\sigma, \tau)$ is the graph distance between $\sigma$ and $\tau$ in $\T$.  However, these graph distances grow logarithmically; when $m>1$, $d(\sigma, \tau) = O(\log_m n)$.  For example, if $\T$ is a complete $m$-tree with $n$ nodes, $h(\T) \le \log_m n$ implies $d(\sigma, \tau) \le 2 h(\T) \le 2 \log_m n$.  Using these bounds,
\[\lambda_2^{d(\sigma, \tau)} \ge \lambda_2^{2 \log_m n} = n^{2 \log_m \lambda_2}.\]
The critical threshold comes from a competition between (i) the logarithmically expanding distances and (ii) the exponentially contracting covariances.
Above the critical threshold, the upper bound in the appendix confirms that $n^{2 \log_m \lambda_2}$ is the rate of $Var_{RDS}(\hat \mu)$.  Below the critical threshold, $Var_{RDS}(\hat \mu)$ is controlled by the terms $\sigma = \tau$ and the variance converges at the standard $O(n^{-1})$ rate.   For more details, see Section \ref{sec:ProofCritical} in the appendix.

\section{The gap between sampling with and without-replacement} \label{sec:replacement}

Define the number of repeated pairs as 
\[R_n = |\{\sigma, \tau \in \T| \tau \ne \sigma, X_\tau = X_\sigma\}|.\]
This section studies $\E_{RDS}(R_n)$ as $n$ and $N$ grow in tandem. Because $R_n$ counts \textit{pairs} of repeats, $\E(R_n)$ could grow at rate $n^2$.  Proposition \ref{prop:lowerRn} and Theorem \ref{thm:upperRn} show that if $n = o(\sqrt{N})$ and some additional  assumptions, then $\E_{RDS}(R_n) \asymp n$.  In particular, this shows that the rate of resampling does not depend on $\lambda_2$.  

\begin{prop} \label{prop:lowerRn}
Under the \tpns, suppose that $G$ is undirected and $P$ is a simple random walk.  If $deg(i) <D$ for all nodes $ i \in G$, then
\[\E(R_n) \ge n/D. \]
%for some constant $c'$. 
\end{prop}
The proof is based on the fact that if $X_\sigma = i$, the probability of transitioning back to the state of $X_{\sigma'}$ is $1/deg(i) \ge 1/D$.  The full proof is contained in Section \ref{app:replacement} of the appendix.

As  Proposition \ref{prop:lowerRn} shows, the \tp can have several repeated samples.  However, this alone does not prevent the variance from decaying at rate $1/n$; the decay of the variance is determined by the critical threshold, $m > \lambda_2^{-2}$. 
 The next result gives a matching upper bound for $\E(R_n)$.  This shows that the rate of $\E(R_n)$ does not depend on the critical threshold.

\begin{theorem} \label{thm:upperRn}
Consider a sequence of samples  $\{X_\tau: \tau \in \T_n\}$ that are sampled from a $(\T_n,P_N)$ \textit{-walk on } $G_N$, where $n$ and $N$ are both growing. Suppose that the sequence $\T_n$ satisfies the conditions of Theorem \ref{thm:upperBound}; that is, there is a balanced infinite tree $\T$ that grows at rate $m$ and $\T_n$ is a sequence of subtrees that successively add one generation at a time.

If (1) the stationary distribution is bounded, $\pi_i \le c/N$ for all $i$ and all $N$; (2)  the number of eigenvalues $\lambda_\ell$ that exceed the critical threshold $1/\sqrt{m}$ is bounded by $k$ for all $N$;  and (3) $n = o(\sqrt{N})$, then
\[\E(R_n) = O((\log n) n).\]
\end{theorem}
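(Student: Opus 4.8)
The plan is to compute $\E(R_n)$ directly, by writing it as a double sum over ordered pairs of distinct tree nodes of the probability that the two samples land on the same vertex of $G$, and then splitting the pairs by their tree distance. For fixed $\sigma \ne \tau \in \T_n$ with $d(\sigma,\tau)=t$, reversibility gives $(X_\sigma, X_\tau) \stackrel{d}{=} (X(0),X(t))$ with $X(0)\sim\pi$, so
\[
\pr(X_\sigma = X_\tau) = \sum_{i \in G} \pi_i \, P^t_{ii} = \sum_{i} \pi_i\Big(\pi_i + \pi_i \sum_{\ell=2}^N \lambda_\ell^t f_\ell(i)^2\Big),
\]
using Equation \eqref{eq:tsteps}. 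The first term is $\sum_i \pi_i^2 \le c/N$ by assumption (1). For the second term, $\sum_i \pi_i^2 f_\ell(i)^2 \le (c/N)\sum_i \pi_i f_\ell(i)^2 = c/N$ since the $f_\ell$ are $\pi$-orthonormal, so $\pr(X_\sigma = X_\tau) \le \frac{c}{N}\big(1 + \sum_{\ell=2}^N |\lambda_\ell|^t\big)$. Assumption (2) lets me control the spectral sum: at most $k$ eigenvalues exceed $1/\sqrt m$ in absolute value, each contributing at most $1$; the remaining eigenvalues satisfy $|\lambda_\ell| \le 1/\sqrt m$, and here I would use $\sum_{\ell}|\lambda_\ell|^t \le \sum_\ell \lambda_\ell^2 \cdot (\text{something})$ — more carefully, $\sum_{\ell:|\lambda_\ell|\le 1/\sqrt m} |\lambda_\ell|^t \le m^{-t/2}\sum_\ell \lambda_\ell^{2} \cdot m^{(2-t)/2}$ is not quite right, so instead bound the tail directly by $N \cdot m^{-t/2}$ for $t \ge 2$, which is harmless because it gets multiplied by $c/N$. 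So for every distinct pair at tree distance $t \ge 1$,
\[
\pr(X_\sigma = X_\tau) \le \frac{c}{N}\big(1 + k + m^{-t/2} N\big) = O\!\Big(\tfrac{1}{N} + m^{-t/2}\Big).
\]

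Next I would sum over pairs. Summing the $O(1/N)$ term over all $n(n-1)$ ordered pairs gives $O(n^2/N) = o(n) \cdot O(1)$, which is $O((\log n) n)$ — in fact $o(n)$ — by assumption (3), $n = o(\sqrt N)$. For the $m^{-t/2}$ term I need to count, for each $t$, the number of ordered pairs $(\sigma,\tau)$ in $\T_n$ with $d(\sigma,\tau)=t$; call this $q_t$. The key point is that the balanced growth-rate hypothesis gives $\sum_t q_t m^{-t/2} = O((\log n) n)$. To see this, note $q_t \le n \cdot \max_\sigma |\{\tau: d(\sigma,\tau)=t\}|$, and each such ball of radius exactly $t$ around $\sigma$ has size at most $O(m^{\min(t,h)})$ since the tree grows at rate $m$ and has height $h = O(\log_m n)$; splitting the sum at $t = h$ and using $m^{t/2} \cdot m^{-t/2}$ cancellation for $t \le h$ gives roughly $n \sum_{t \le h} m^t m^{-t/2} \cdot (1/n)$... — more cleanly: $\sum_t q_t m^{-t/2} \le n\sum_{t=1}^{\infty} C m^{t} m^{-t/2}$ diverges, so this crude bound fails, and instead I must use that $\sum_t q_t = n(n-1)$ together with the fact that most pairs are at distance $\Theta(\log n)$. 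The right approach is: for $t \le h$, $q_t \le C n m^{t}$ (choose a node at depth $\ge t$, at most $n$ choices, then a node in a subtree of appropriate size) is still too weak; the correct count uses $q_t = \sum_\ell 2|\D_\ell(0)|\cdot(\text{pairs in different subtrees at the split})$, and the balanced condition controls exactly this, giving $q_t = O(n \cdot m^{t/2} \cdot \text{poly}(t))$ — no. I will instead bound $q_t \le 2n \, s_t$ where $s_t = \sup_\sigma |\{\tau : d(0,\tau)=\|\sigma\|+t\text{ in }\D(\sigma)\}| \le C m^{t}$, but pair this with the observation that for this to contribute we also need $\|\sigma\| \le h - $ something; ultimately $\sum_{t} q_t m^{-t/2}$ is dominated by a geometric-type series in $t$ with ratio $m^{1/2}\cdot m^{-1/2}=1$ truncated at $h \sim \log_m n$ levels, yielding $O(n \log n)$.

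**The main obstacle** is precisely this combinatorial bound on $\sum_t q_t m^{-t/2}$: the spectral side contributes a factor $m^{-t/2}$ that exactly cancels the $m^{t/2}$-type growth in the number of pairs at distance $t$ (this is the same $m \lessgtr \lambda_2^{-2}$ cancellation, now with the worst tolerated eigenvalue $1/\sqrt m$ producing the critical exponent $1$), so one gets a sum of $\sim\log n$ terms each of size $O(n)$, hence $O((\log n)n)$ — but making this rigorous requires the balanced-tree machinery from Theorem \ref{thm:upperBound} applied to the pair-distance distribution, essentially re-running the bound on $\pr(\|I\wedge J\|=\ell)$ from its proof. I would therefore organize the argument to invoke Theorem \ref{thm:upperBound} (or its internal estimates) as a black box: note that $\E(R_n) = n(n-1)\,\E\big[\pr(X_I = X_J \mid d(I,J))\big]$ where $I,J$ are uniform on $\T_n$, bound the inner conditional probability by $\frac{c}{N}(1+k) + c\,(1/\sqrt m)^{\,d(I,J)}$, and observe that $\E\big[(1/\sqrt m)^{d(I,J)}\big] = \G_n(1/\sqrt m)$ with $1/\sqrt m$ playing the role of "$\lambda_2$" at the exact critical value $m = \beta$, so Theorem \ref{thm:upperBound}'s critical case gives $\G_n(1/\sqrt m) = O((\log n)^2 n^{-1})$. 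Then $\E(R_n) \le n^2\big[\tfrac{c(1+k)}{N} + c\,(\log n)^2 n^{-1}\big] = O(n^2/N) + O((\log n)^2 n) = O((\log n)^2 n)$, which is $O((\log n) n)$ up to the stated log slack; and assumption (3) makes the $n^2/N$ term negligible. The remaining care is just checking that the balanced/growth hypotheses on $\T_n$ are exactly those of Theorem \ref{thm:upperBound}, which is given.
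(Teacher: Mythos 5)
Your approach is essentially the paper's: bound $\pr(X_\sigma = X_\tau) = \sum_i \pi_i P^{d(\sigma,\tau)}_{ii} \le cN^{-1}\sum_\ell \lambda_\ell^{d(\sigma,\tau)}$ via the spectral decomposition and assumption (1), sum over pairs to convert the spectral sum into $\G$-functions, split the spectrum at $1/\sqrt{m}$ using assumption (2), invoke Theorem \ref{thm:upperBound}, and kill the $n^2/N$ terms with assumption (3). The one substantive difference is in how the sub-threshold eigenvalues are handled. You collapse every $|\lambda_\ell| < 1/\sqrt{m}$ to the critical value $1/\sqrt{m}$ and apply the critical case $m=\beta$ of Theorem \ref{thm:upperBound}, which gives $\G_n(1/\sqrt m) = O((\log n)^2 n^{-1})$ and hence only $\E(R_n)=O((\log n)^2 n)$ --- one $\log n$ short of the stated bound, as you concede. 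The paper instead keeps each eigenvalue where it is: for $\ell$ with $|\lambda_\ell|<1/\sqrt m$ it applies the sub-critical case $\G(\lambda_\ell)=O((\log n)n^{-1})$, sums over at most $N$ such indices, and multiplies by $cN^{-1}n^2$ to get $c(\log n)n$; the at most $k$ super-threshold eigenvalues are handled with the super-critical rate $O((\log n)n^{-\alpha})$, contributing $k(\log n)n^{2-\alpha}/N \to 0$. (Your cruder bound of $1$ per super-threshold eigenvalue also works here and is simpler.) Note that the paper's sharper route quietly assumes the sub-critical constant, which behaves like $(1-\sqrt m\,|\lambda_\ell|)^{-1}$, is uniform over $\ell\in\mathscr{B}$ and over $N$; your critical-case bound sidesteps that uniformity issue at the price of the extra $\log n$. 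Finally, your second paragraph's attempt to count pairs at each tree distance directly is unnecessary and never converges to a usable bound --- the reduction to $\G_n(1/\sqrt m)$ in your last paragraph is the right move, and you should cut the rest.
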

Notice that condition (1) is implied by the bounded degree assumption in Proposition \ref{prop:lowerRn}.  Importantly, the rate of this upper bound does not depend on $\lambda_2$.    So, under the conditions of these results, $\lambda_2$ and the critical threshold do not effect the rate of $\E(R_n)$.

The key to proving Theorem \ref{thm:upperRn} is the relationship between the trace of a matrix and its eigenvalues.  First, notice that 
\begin{equation}\label{eq:ern}
\E(R_n) =  \sum_{\sigma \ne \tau} \pr(X_\sigma = X_\tau).
\end{equation}
Let $tr(P)$ denote the trace of $P$.
\[\pr(X_\sigma = X_\tau) 
= \sum_{i \in G} \pi_i \pr( X_\tau = i|X_\sigma = i) = \sum_{i \in G} \pi_i P_{ii}^{d(\sigma, \tau)} \le c N^{-1} tr(P^{d(\sigma, \tau)}) = c N^{-1} \sum_\ell \lambda_\ell^{d(\sigma, \tau)}\]

To bound $\E(R_n)$, exchange the summation over $\sigma \ne \tau$ from  Equation \eqref{eq:ern} with the summation over $\ell$ in the line above.  Each term in the resulting summation can be expressed with  $\G$ functions and bounded by Theorem \ref{thm:upperBound}.  The full proof is contained in Section \ref{app:replacement} of the appendix.

\subsection{Comparison to a more realistic model with simulation} \label{sec:replacementSim}
For mathematical tractability, the theorems above make two simplifications.  First, the theorems use the \tpns, which samples with-replacement.    Second, the theorems study the IPW estimator.   
%In practice, the sampling is done without-replacement and the scaling constant in the IPW estimator is unknown. 
The simulations in this section (and in the rest of the paper) use a more realistic setting.  First, the simulated samples are collected without-replacement.  Second, the simulations study the Volz-Heckathorn estimator.
 These simulation results find that the Markov model with the IPW estimator is a good approximation to the more realistic model, so long as the number of sampled nodes is much smaller than the population size, as predicted by Theorem \ref{thm:upperRn}.  
 
The simulations are performed on networks simulated from the Stochastic Blockmodel.   The ten panels in Figure \ref{fig:replacement} correspond to ten different model settings.  Each of the ten models has $N =$10k nodes, equally balanced between group zero and group one.  The probability of a  connection between two nodes in different blocks is $r$ and the probability of connection between two nodes in the same block is $p$.  Figure \ref{fig:replacement} parameterizes this model via (1)  the expected degree $(p+r)N/2$ and (2) the second eigenvalue of $\mathscr{P} = \E(D)^{-1} \E(A)$, 
\begin{equation}\label{eq:sbmlambda2}
\lambda_2(\mathscr{P}) = \frac{p-r}{p+r},
\end{equation}
where expectations are under the Stochastic Blockmodel (cf example on page 1893 of \cite{rohe2012sp}). In group zero, $y_i = 0$ and in group one, $y_i =1$.  The horizontal axis in each plot represents the sample size; the vertical axis represents the design effect (as estimated via simulation).   The five columns of plots correspond to five different values of $\lambda_2(\mathscr{P})$.

\begin{figure}[h] %  figure placement: here, top, bottom, or page
   \centering
   \includegraphics[width=6in]{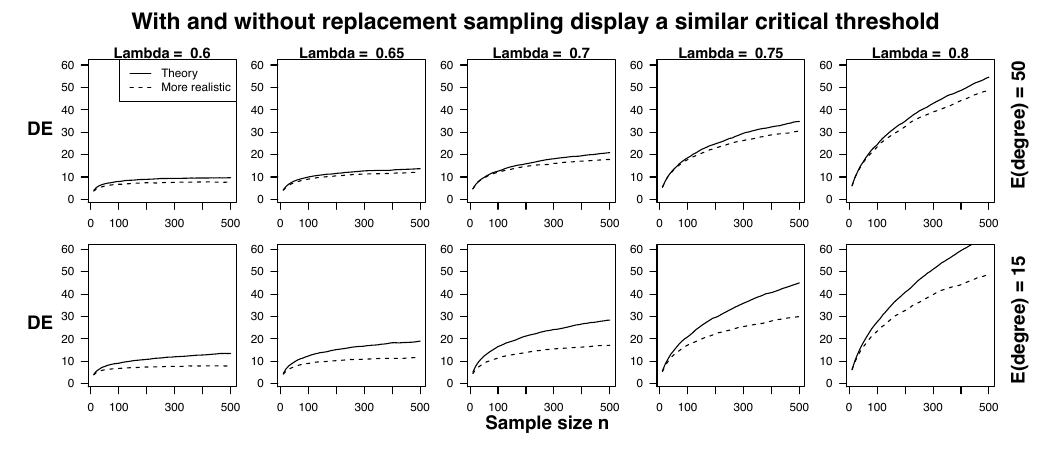} 
   \caption{In all figures, $m =2$.  Each column of panels corresponds to a different value of $\lambda_2$, from left to right, $\lambda_2 \in (.6, .65, .7, .75, .8)$.  In the panels on the left, the lines are roughly flat. In the panels on the right, the lines are quickly increasing.  This shows that the \tp and the more realistic model have a critical threshold somewhere between $\lambda_2(\mathscr{P})= .6$ and $\lambda_2(\mathscr{P})=.8$.}
   \label{fig:replacement}
\end{figure}

To simulate from the more realistic model, 
the simulation first generates $\T$ as a Galton-Watson tree with offspring distribution $1 + $ Binomial$(2,1/2)$.  A tree is grown until it reaches 2000 nodes; while only 500 samples are kept, it will become clear why $\T$ must be initialized to be larger than 500.  This tree is seeded with a participant selected from the stationary distribution.  
Then, each participant randomly selects their referrals from their ``viable'' friend list
 without-replacement; a friend is viable if it has not yet appeared in the sample.
One participant at a time makes all of their referrals, iterating through the tree in the fashion of a breadth first search.  A difficulty arises if  $\sigma \in \T$ should produce three referrals, but $X_\sigma$ does not have that many viable friends.  When this happens in the simulation, all viable friends are referred and the remaining descendants in $\T$ are removed; this happens infrequently in the simulation. Once this process samples 500 nodes, the remaining nodes in $\T$ are pruned.  This pruned tree is then used to run the \tpns.  For each of the ten networks, this process is simulated 1000 times.  The sample variance across these 1000 samples is divided by the variance of uniform with-replacement sampling, $(4n)^{-1}$. 

Because the trees are simulated to have $m=2$, Theorem \ref{thm:critical} suggests that the design effect grows when $\lambda_2$ exceeds $1/\sqrt{2} \approx .7$.  In the left most plots, the solid lines are roughly flat. In the right most plots, the solid lines are quickly increasing.  This shows that the \tp has a critical threshold somewhere between .6 and .8; this is consistent with the theory.  Similarly, the dashed lines are roughly flat in the left plots and quickly increasing in the right plots. Under these simulation settings, the more realistic model mimics the critical threshold behavior identified in Theorem \ref{thm:critical}. 

In the first row of plots, each node has an expected degree of fifty.  In the second row of plots, each node has an expected degree of fifteen.  In the top row, the solid and dashed lines are close because there are fewer repeated samples.  In the bottom row, the lines for the sparse graphs are not as close.  However, both rows display the same qualitative behavior (flat when $\lambda_2 = .6$ and increasing when $\lambda_2=.8$).

\section{Reinterpreting the results of \cite{goel2010assessing} with Theorem \ref{thm:critical}} \label{sec:reinterpreting}
One of the most highly cited bootstrap procedures in the previous literature was proposed in \cite{salg} and is often referred to as the Salganik bootstrap.  Later, \cite{goel2010assessing} showed in  simulation experiments  that this procedure produces ``misleadingly narrow'' confidence intervals.  This section reinterprets those simulation results using Theorems \ref{thm:var} and \ref{thm:critical} above.  This reinterpretation motivates an alternative bootstrap procedure which is explored in the next section.

In the simulation study, \cite{goel2010assessing} used several different graphs $G$ that were collected in previous empirical social network research.  In each of several experiments, $y$ is a demographic measurement such as race or gender.  Given $G$ and $y$, \cite{goel2010assessing} simulated the respondent-driven sample with a \tpns, where $\T$ is a Galton-Watson tree with $m = 1.5$.  After collecting a sample of $n=500$, \cite{goel2010assessing} constructed a bootstrapped confidence interval with the Salganik bootstrap \citep{salg}. To resample the observed individuals, the Salganik bootstrap constructs a Markov transition matrix $\hat P_o \in \R^{n\times n}$ on the observed individuals as follows:

\begin{quote}
[D]ivide the sample members into two sets based on how they were recruited: people recruited by someone in group $A$ (which we will call $A_{rec}$) and people recruited by someone in group $B$ (which we will call $B_{rec}$). For example, $A_{rec}$ could be the set of all sample members who were recruited by someone with HIV. $\dots$ [B]ased on the group membership [of the current state], we draw with-replacement from either $A_{rec}$ or $B_{rec}$. \citep{salg}
\end{quote}  
The fundamental problem with the Salganik bootstrap is that each bootstrap sample is a $(\C, \hat P_o)\textit{-walk}$ on the observed individuals, where $\C$ is a chain graph.  By using $\C$ instead of $\T$, \textit{the Salganik bootstrap resampling distribution is a Markov chain, not a ``Markov tree.''}

In the simulation results of \cite{goel2010assessing}, the bootstrap has particularly poor coverage on a subset of the features.  
These features are correlated with the underlying social network.  In particular, if there is an eigen-pair $(\lambda_\ell, f_\ell)$ of $P$ where $\langle y, f_\ell \rangle_\pi^2$ is large and $1.5 > 1/\lambda_\ell^2$, then the \tp exceeds the critical threshold, while the $(\C, \hat P_o)\textit{-walk}$ does not.  
If the original sample \tp exceeds the critical threshold, then estimates derived from this sample will be highly variable.  However, because the $(\C, \hat P_o)\textit{-walk}$ resamples with a chain graph $\C$, it has $m=1$.  As such, the $(\C,\hat P_o)\textit{-walk}$ will never exceed the critical threshold.  The confidence intervals from the Salganik bootstrap will contract at rate $O(n^{-1/2})$, while the true uncertainty is decaying at a slower rate.  This leads to confidence intervals which are too narrow.

\section{Bootstrap resampling with $\T$} \label{sec:bootstrap}

To allow for the bootstrap distribution to exceed the critical threshold, this section proposes \atbns.  The Salganik bootstrap will be referred to as \achbns.  The \textsc{a-} prefix stands for \textit{assisted}, because they are both assisted by some node feature to create a Markov transition matrix.  In the \achbns, the construction of the matrix $\hat P_o$ is assisted by the outcome of interest $y$ (via the sets $A_{rec}, B_{rec}$).  
%Because $\hat P_o$ depends upon the outcome of interest, a new set of bootstrap samples must be simulated for each feature of interest.  
The \atb  also constructs a Markov transition matrix on the observed individuals, $\hat P \in \R^{n \times n}$, and the construction of this matrix is assisted by some node features.  However, unlike \achbns, \atb does not require that $\hat P$ is constructed from the same variable as the outcome of interest $y$.  As described in the previous section, the \achb directly samples from the $(\C, \hat P_o)\textit{-walk}$.  Similarly, the \atb   directly samples from the $(\T,\hat P)\textit{-walk}$, where the construction of $\hat P$ is described in the next subsection. \texttt{R} code for \atb is available at \url{https://github.com/karlrohe/mRDS}.

% and the \atb will uses these characteristics to construct an alternative $\hat P$. 

Over the course of this research, \cite{baraff2016estimating} proposed another bootstrap procedure which also uses $\T$ to perform the resampling.  This procedure will be referred to as \utbns. The \textsc{u-} prefix stands for \textit{unassisted} because it does not require any node features to construct its Markov transition matrix.  In particular, the \utb resamples from the $(\T, \hat P_u)\textit{-walk}$, where $\hat P_u \in R^{n \times n}$ is defined as follows:

\begin{quote}
\dots the initial step is to resample with-replacement from the seeds of the trees. Next, from each of those seeds, we resample with-replacement from their recruits, creating the second level of the bootstrap sample trees. From each of these new recruits, we then resample with-replacement from their recruits to create a third level. This process continues iteratively until no further recruits are available. \citep{baraff2016estimating}  
\end{quote}
To define $\hat P_u$ in the notation of this paper, let $A_\T \in \{0,1\}^{n \times n}$  be the (asymmetric) adjacency matrix of the directed graph $\T$.  So, for $\sigma \in \T$ with $\sigma \ne 0$, $[A_\T]_{\sigma', \sigma} = 1$.  All other elements of $A_\T$ are zero. Define $D_\T$ as a diagonal matrix containing the number of referrals from $\sigma$ in element $\sigma,\sigma$; $[D_\T]_{\sigma, \sigma} = \sum_\tau A_{\sigma,\tau}$.  Note that if $\sigma \in \T$ is a leaf node, then $[D_\T]_{\sigma, \sigma}=0$. 
The Markov transition matrix is $\hat P_u = D_\T^{-1} A_\T$, where $0/0$ is defined to be zero and the process terminates upon reaching a leaf node.   This $\hat P_u$ is neither irreducible nor reversible.

\subsection{The \atb procedure}

This subsection describes the construction of $\hat P$ used in the \atbns.  Presume that every node in $G$ belongs to a class, $z: V \rightarrow \{1, \dots, K\}$, and $z(i)$ is observed if node $i$ is sampled.  These variables could denote some demographic characteristics or HIV status.  The variables $\{z(X_\tau): \tau \in \T\}$ assist the estimation of the Markov transition matrix on the $n$ individuals  in the original sample $X_\tau$.  

All probability statements in this section are conditional on the original sample.  So, to temporarily conceal the randomness of the original sample, denote the observed individuals with lower-case letters, $\{x_\tau: \tau \in \T\}$.  Recall that for any $\sigma \in \T$ with $\sigma \ne0$, the parent node of $\sigma$ is denoted as node $\sigma' \in \T$.  Denote $N(u) =  \sum_{\sigma} \textbf{1}\{z(x_{\sigma})= u\}$ as the number of nodes in class $u$.  
Define $\hat A: \{1, \dots, K\}^2 \rightarrow \R$ to count the number of transitions between node types; for $u,v \in \{1, \dots, K\}$,
\begin{equation}\label{tildep}
\hat A(u,v) =  \sum_{\sigma \ne 0} \textbf{1}\{z(x_{\sigma'})= u, z(x_{\sigma})= v\}.
\end{equation}
Denote $\hat D(u)$ as the number of samples in class $u$ that make a referral, $\hat D(u)= \sum_v \hat A(u,v)$. 

If $X_0^*$ and $X_1^*$ represent one step of \utbns,  then $X_0^*$ and $X_1^*$ take values in the set of originally sampled individuals $ \{x_\tau : \tau \in \T\}$.  
%; this is $.  
For any $x_\sigma$ and $x_\tau$  in the original sample, define $u = z(x_\sigma)$ and $v = z(x_\tau)$. Then, the probability of a transition from $x(\sigma)$ to $x(\tau)$  in \utb 
% , define $\hat P$ such that
is defined to be
\begin{equation}\label{eq:Phat}
\hat P_{x_\sigma, x_\tau} = \pr \left(X_{1}^* = x_\tau | X_{0}^*= x_\sigma\right)= \frac{\hat A(u,v)}{\hat D(u)} \frac{1}{N(v)}.
\end{equation}
This is equivalent to first taking a Markov transition from $z(x_\sigma)$ to some other node type $v$
%sampling the type $v$ as a Markov transition on $\hat A$ from $z(x_\sigma)$ 
and then choosing an individual uniformly from the set of $N(v)$-many individuals of this type.  
Using the matrix $\hat P$, the resampling distribution of \atb is a $(\T,\hat P)\textit{-walk}$ on $\{x_\tau: \tau \in \T\}$.  Denote a resample as $\{X_\tau^*: \tau \in \T\}$; using these samples, construct $\hat \mu^*$ using $\{y(X_\tau^*): \tau \in \T\}$ and any other measured features on the originally sampled individuals $\{x_\tau: \tau \in \T\}$ (e.g. their degree in $G$).  

To sample the seed node(s), \atb first samples a ``mother node'' uniformly at random from the original sample.  Then, this mother node refers all of the seed nodes in the bootstrap sample. 
%The value of $y$ for this mother node is not used in the computation of $\hat \mu^*$.  
The mother node simulates the fact that some group is responsible for finding the seed nodes and this group is likely to constrained in their ability to select seeds.  Including the mother node in the resampling increases the dependence of the sample and thus the variability of $\hat \mu^*$.  

The key ideas of \atb can also be used to perform sample size calculations. 
%For practitioners wishing to perform power calculations, one 
%could use \textit{a priori} guesses inside of Equation \eqref{eq:Phat}.  
To do this, one must guess (i) $K$, (ii) for each $u,v \in 1, \dots K$, the probability that someone in class $u$ refers someone in class $v$,   (iii) the proportion of  individuals that belong to each class, (iv) the values of $y$ within each class, and (v) the topological structure of $\T$.  
%To perform both \atb and these power calculations, there is 
\texttt{R} code for this is available at \url{https://github.com/karlrohe/mRDS}.

% and the proportion of individuals in each class which are HIV+ 
%
%In order to aide faster and easier estimates, for studies that use \atbns, it is requested that papers report $\lambda_2(\hat P)$ to aid the design of future studies.  
%
%
%
% $\lambda_2$ 

\subsection{Simulations to compare the bootstrap procedures}

This section investigates the coverage properties of the confidence intervals generated from \atbns, \utbns, \achbns, and \ssbns.    
%\ssb fits and resamples from 
The successive-sampling (SS) model was first described for RDS in \cite{gile2011improved}. The \ssb fits and resamples from the SS model and was introduced in the \texttt{R} package \texttt{RDS} \citep{rdspackage}.  
%The key advantage of the SS model is that it samples without-replacement. 
The SS model is not Markovian and so it cannot be described as a $(\T, P)$\textit{-walk}.  The \ssb requires an estimate of the population size; in the simulations below, the function was provided with the true value of the population size.

\subsubsection{Simulation settings} \label{sec:simsettings}

In total, the figures below change three aspects of the simulations settings:  (i) the sample size of the RDS; (ii)  the strength of the bottleneck in $G$, i.e. $\lambda_2$;  and (iii) the strength of the relationship between the outcome $y$ and the bottleneck in $G$, i.e. $\rho_\pi^2(y,f_2)$ which is defined in Equation \eqref{def:rho}.  While the asymptotic properties of $Var_{RDS}$ only depend on whether $\rho_\pi^2(y,f_2)$ is zero or nonzero, the magnitude of $\rho_\pi^2(y,f_2)$ is highly relevant in a finite sample. 

To collect the desired sample size, each tree is initially sampled as a Galton-Watson tree with offspring distribution $1 + W$, where $W \sim Binomial(2, 1/2)$.  Then, the RDS sample is constructed without-replacement, using the procedure described in Section \ref{sec:replacementSim}.  So, $m = 2$ and the critical threshold is when the second eigenvalue is equal to $1/\sqrt{2}  \approx .71$.

To vary the value of $\lambda_2$, each network $G$ is simulated from a two block Stochastic Blockmodel  \citep{holland1983stochastic} with $70\%$ of the nodes in block $0$ with $z(i) = 0$ and $30\%$ of the nodes in block $1$ with $z(i) = 1$.  The size of the networks is set to $N=50,000$ and the probability of a connection between two nodes in different blocks is $r = 15/N$.  Then, $\lambda_2$ varies between $.5$ and $.9$ by varying the probability of a connection between two nodes in the same block.

To control $\rho_\pi^2(y,f_2)$, the simulations examine two types of node features $y$, \texttt{aligned} and \texttt{correlated}.  In the simulations for \texttt{aligned}, $y(i) = z(i)$ for all $i \in G$.  In the \texttt{correlated} simulation, $45\%$ of the nodes in block 0 have $y(i)=1$ and $10\%$ of the nodes in block 1 have $y(i)=1$; the rest of the nodes have $y(i) = 0$.   
%The names \texttt{aligned} and \texttt{correlated} come from the value of $\rho_\pi^2(y,f_2)$ which is defined in Equation \eqref{def:rho}.  
%In this simulation, $\rho_\pi^2(y,f_2)$ is random because $f_2$ depends on the randomness in the graph.  
In the \texttt{aligned} simulation, $\rho_\pi^2(y, f_2)$ is close to one.  In the \texttt{correlated} simulation, $\rho_\pi^2(y, f_2)$ is around $.15$.  See Figure \ref{fig:squaredCorrelation} in the appendix for more details.  

%$\langle y, f_2 \rangle_\pi^2$ in Equation \eqref{eq:var}.
%When $y(i)=z(i)$, the node feature aligns with the main bottleneck in the network.  In the second case, the node feature and bottleneck are correlated, but not perfectly.

The first step of the simulation is to generate the referral tree $\T$ from the Galton-Watson distribution with $n = 1000$ nodes. Then, the two types of node features $y$ are generated (\texttt{aligned} and \texttt{correlated}).  The $\T$ and $y$'s are fixed across all simulations.  Then, the following six steps create one replicate of the experiment:
\begin{enumerate}
\item Simulate the underlying network $G$ from a Stochastic Blockmodel.  To parameterize the Stochastic Blockmodel, set $r = 15/N$.   Then, with the desired $\lambda_2$, specify the edge probability $p$  via Equation \eqref{eq:sbmlambda2}.
\item Simulate a respondent-driven sample of 1000 nodes, without-replacement, using the more realistic model described in Section \ref{sec:replacementSim}, 
\item To examine sample sizes $n \ne 1000$, retain only the first $n$ samples.
\item Draw 500 samples from each of the resampling distributions (\atbns, \utbns, \achbns, and \ssbns).
\item Compute $\hat \mu_{VH}^*$ on each of the bootstrap samples.  
\item For each resampling distribution, use the 500 values of $\hat \mu_{VH}^*$ to compute the percentile confidence interval with the $5$th to the $95$th percentile of the bootstrap distribution for $\hat \mu_{VH}^*$.
\end{enumerate}
To examine the frequentist properties of these confidence intervals, the above five steps are repeated 501 times; 501 to avoid confusion with the number of bootstrap samples in step 3.

\subsection{Simulation results; the coverage probabilities of the confidence intervals}
Figures \ref{fig:2aligned} and \ref{fig:2correlated} display the estimated coverage probabilities as a function of the bottleneck strength $\lambda_2$.  The three panels correspond to different sample sizes.  Figure \ref{fig:2aligned} displays the results for \texttt{aligned} $y$.  Figure \ref{fig:2correlated}  gives the results for \texttt{correlated} $y$.  While all of the confidence intervals are nominally $90\%$, the figures show that the actual coverage probabilities can deviate substantially from $90\%$.  

%The \utb creates conservative intervals for smaller values  of $\lambda_2$ and anti-conservative intervals for larger values of $\lambda_2$. 
 Across simulation settings, \achb produces nominally $90\%$ confidence intervals which have coverage  probabilities ranging from $90\%$ to $10\%$.  These coverage probabilities are small in situations where the bottleneck is strong.  This demonstrates the sensitivities of \achb discussed in Section \ref{sec:reinterpreting} and in \cite{goel2010assessing}.  \ssb has coverage probabilities close to $90\%$ when $y$ is \texttt{aligned} and $\lambda_2$ is not too large. However, when $y$ is \texttt{correlated}, the coverage probabilities for \ssb quickly diminish for moderate to large values of $\lambda_2$.  The \utb confidence intervals are conservative for small values of $\lambda_2$ and anti-conservative for larger values of $\lambda_2$.   The intervals produced by \atb are conservative across the simulation settings.  

Note that in these simulations, the intervals from \atb appropriately  cover $\mu_{true}$ (i.e. not $\E(\hat \mu_{VH}|X_0)$).  Similar to Remark \ref{remark:seedbias}, these intervals account for the uncertainty due to seed selection (sometimes called seed-bias).

The results for the ``studentized" confidence intervals were studied, but are not displayed.  The ``studentized" confidence intervals are constructed as $\hat \mu_{VH} \pm 1.65  \hat \sigma$, where $\hat \sigma$ is the standard error of $\hat \mu_{VH}^*$ in the 500 bootstrap samples. In the simulations, the studentized intervals from the \atb often fail to be contained in $[0,1]$, despite the fact that $y_i \in \{0,1\}$ for all nodes $i$.  Perhaps one reason for these strange results is that the accuracy of the studentized intervals depends on $\hat \mu_{VH}$ being asymptotically normal, while results in \cite{li2015central} suggest that it is not. In the limited simulations that were performed, the percentile interval was often (i) narrower and (ii) more likely to cover $\mu_{true}$ than the studentized interval. The percentile interval can simultaneously improve both of these metrics because it is not necessarily symmetric around the point estimate.

%
%\begin{figure}[htbp] %  figure placement: here, top, bottom, or page
%   \centering
%Coverage probabilities for nominally $90\%$  intervals when outcome is \texttt{aligned} with the network.
%   
%   \includegraphics[width=6in]{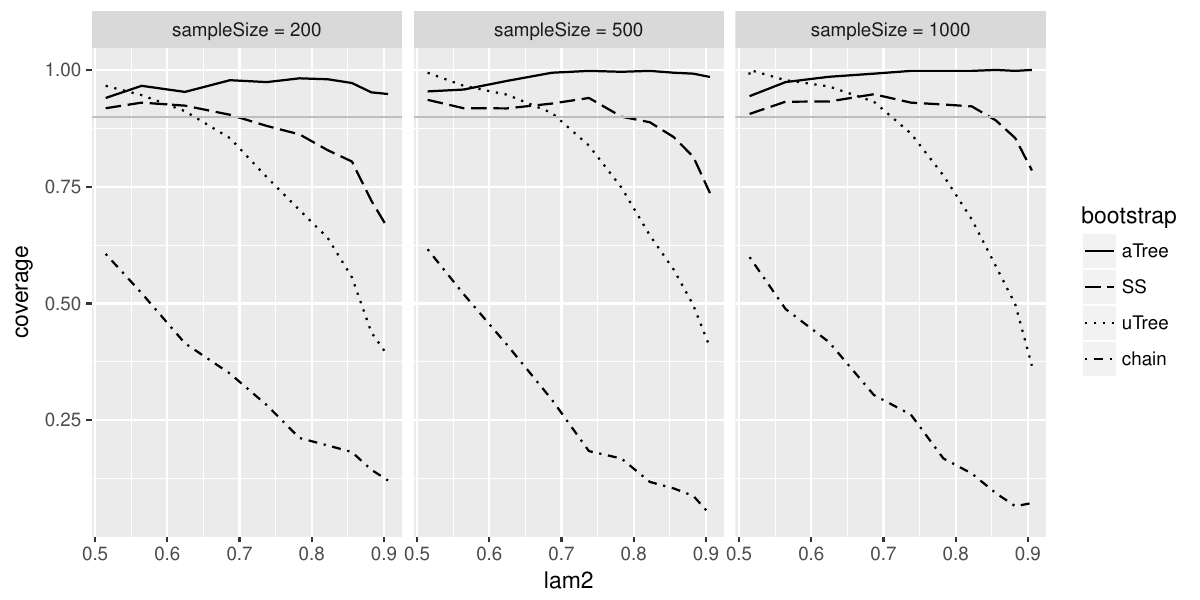} 
%   \caption{In these simulations, $y$ is perfectly \texttt{aligned} with $z$, the referral bottleneck in the graph.  Across different sample sizes and varying strengths of referral bottlenecks, \atb creates confidence intervals with conservative coverage probabilities.}   \label{fig:2aligned}
%\end{figure}
%
%\begin{figure}[htbp] %  figure placement: here, top, bottom, or page
%   \centering
%   
%Coverage probabilities for nominally $90\%$ intervals when outcome is \texttt{correlated} with the network.
%      
%      
%   \includegraphics[width=6in]{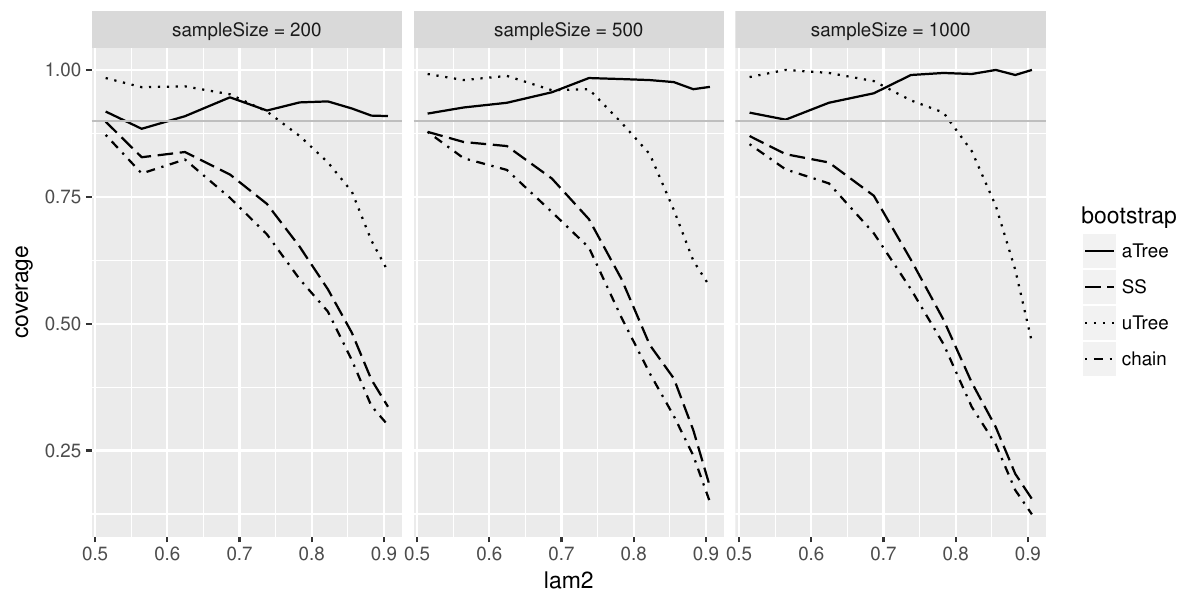} 
%   \caption{In these simulations, $y$ is \texttt{correlated} with $z$, the referral bottleneck in the graph.  Across different sample sizes and varying strengths of referral bottlenecks, \atb creates confidence intervals with conservative coverage probabilities.}
%   \label{fig:2correlated}
%\end{figure}

Section \ref{sec:widths} in the appendix presents another simulation which studies the widths of the confidence intervals.  Using a network with $\lambda_2 \approx .82$ (i.e. beyond the critical threshold), it studies how the width of the confidence interval decays as the sample size increases.  See Figure \ref{fig:simWidth} in the appendix for more details.

%\input{text/oldEmpiricalIllustration.tex}
%
%
%\input{text/bootstrapSection.tex}
%\input{text/estimationSection.tex}
%

%\newpage
\section{Discussion} \label{sec:discuss}

A common concern in the RDS literature has been the design effect of network sampling techniques \citep{salg, goel2010assessing, szwarcwald2011analysis, johnston2013empirical, verdery2013network}.  Theorems \ref{thm:var} and \ref{thm:critical} use the Markov model to give a rigorous account of the variance and design effect of RDS.  In particular, if $m>\lambda_2^{-2}$, then the design effect can grow with the sample size; this is equivalent to saying the the variance of the estimator decays slower than $O(n^{-1})$.  For two reasons, if the design effect is growing, then it should not be used for sample size or power calculations.   First, there might not be a central limit theorem to justify this approach \citep{li2015central}.  Second, if $DE$ changes with $n$, then many of the standard formulas are not well defined (or they are incorrect).  Instead of using $DE$ to summarize the quality of the sample, a more reasonable summary would be the ``half-life of the standard error.''  That is, given an RDS with sample size $n$, how much larger should $\tilde n$ be such to decrease the standard error by $50\%$.  For example, estimators which are $\sqrt{n}$-consistent (i.e. constant $DE$) have a half-life of $4$.  Past the critical threshold in RDS, the standard error decays like $n^{\gamma}$, where $\gamma = \log_m \lambda_2$ and  $-1/2 <\gamma<0$.  This means that the half-life of the standard error is $(1/2)^{(1/\gamma)} > 4$.

Section \ref{sec:replacement} examines how well the \tp (which samples with-replacement) approximates a more accurate simulation model (which samples without-replacement).  Proposition \ref{prop:lowerRn} and Theorem \ref{thm:upperRn} give matching lower and upper bounds on  the expected number of repeated pairs in a \tpns. 
So long as $n = o(\sqrt{N})$, and some  further technical conditions, these bounds show that $\lambda_2$ and the critical threshold do not affect the rate of $\E(R_n)$. As such, the critical threshold does not create additional repeated pairs.  Subsection \ref{sec:replacementSim} presents a simulation comparing the \tp to a network sample taken without-replacement.  Under the simulation settings, both the with-replacement and without-replacement samples displayed a similar critical threshold.  

Section \ref{sec:bootstrap} introduces \atbns, a new resampling procedure for computing confidence intervals for $\hat \mu_{VH}$.  In a wide range of simulation settings,  the intervals from \atb produced  intervals with conservative coverage probabilities (i.e. the nominally $90\%$ intervals had actual coverage that exceeded $90\%$).  In contrast, there were simulation settings under which \achbns, \ssbns, and \utb produced intervals with coverage probabilities that fall short of their nominal values.   
A key advantage of the \utb and \ssb is that they do not require $z$.  
In contrast, a key practical limitation of the \achb is that it requires a choice of $z$; 
that is, we must identify the referral bottleneck.  
%If choosing amongst a large set of possible variables to use for $z$, we want to choose the one that provides the largest value of $\lambda_2(\hat P)$ while also ensuring that the class sizes (i.e. the $n(u)$ values) are roughly balanced.  
More research is needed to (1) make \utb and \ssb less sensitive to $\lambda_2$ and (2) guide the choice of $z$ for \atbns.

\begin{figure}[htbp] %  figure placement: here, top, bottom, or page
   \centering
Coverage probabilities for nominally $90\%$  intervals when outcome is \texttt{aligned} with the network.
   
   \includegraphics[width=6in]{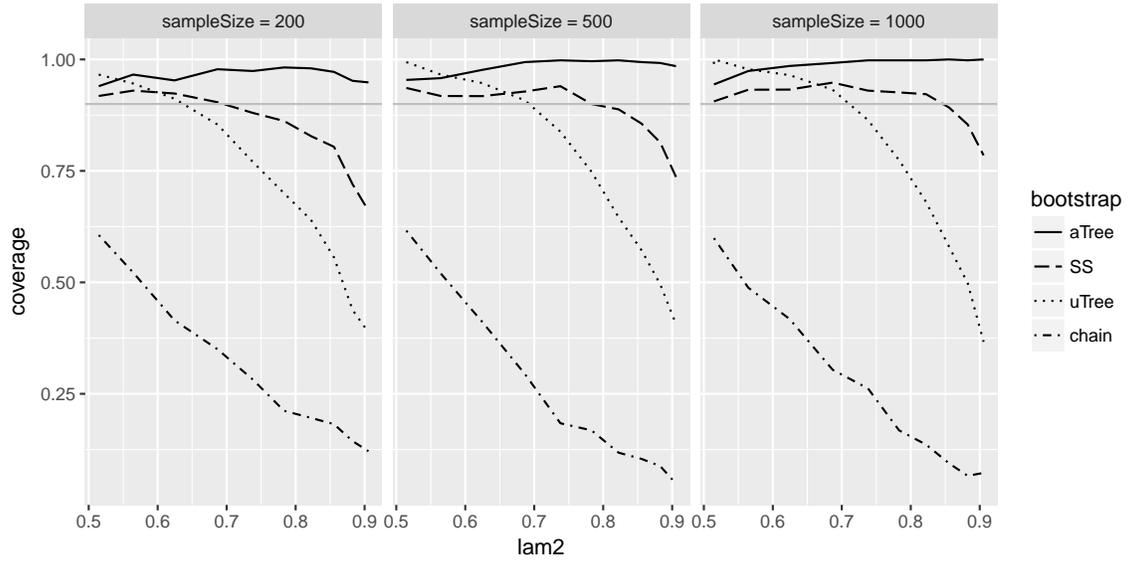} 
   \caption{In these simulations, $y$ is perfectly \texttt{aligned} with $z$, the referral bottleneck in the graph.  Across different sample sizes and varying strengths of referral bottlenecks, \atb creates confidence intervals with conservative coverage probabilities.}   \label{fig:2aligned}
\end{figure}

\begin{figure}[htbp] %  figure placement: here, top, bottom, or page
   \centering
   
Coverage probabilities for nominally $90\%$ intervals when outcome is \texttt{correlated} with the network.

   \includegraphics[width=6in]{sim2-notperfectOverlap-2-16.pdf} 
   \caption{In these simulations, $y$ is \texttt{correlated} with $z$, the referral bottleneck in the graph.  Across different sample sizes and varying strengths of referral bottlenecks, \atb creates confidence intervals with conservative coverage probabilities.}
   \label{fig:2correlated}
\end{figure}

%\newpage

\newpage
\appendix

\section{Proof of Theorem 2.1} \label{varProof}

The proof requires some notation and the following lemma.  Throughout, let $\{X_\sigma : \sigma \in \T\}$ be a \tpns.  Let $\{X(i): i \in 0, 1, \dots\}$ be a Markov chain with the same transition matrix $P$ that is initialized from $\pi$.   
Define $d(\sigma, \tau)$ as the graph distance between  nodes $\sigma$ and $\tau$ in $\T$.
\begin{lemma} \label{lem:rev}
If the transition matrix $P$ is reversible, then for any two nodes $\sigma$ and $\tau$ in the referral tree,
\[\pr(X_\sigma = u, X_\tau = v) = \pr\left(X(0) =u, X(d(\sigma,\tau)) = v\right).\]
\end{lemma}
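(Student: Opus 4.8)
The plan is to establish the identity by induction on the structure of the tree, or more cleanly, by a direct path argument that exploits reversibility together with the Markov property indexed by $\T$. First I would observe that any two nodes $\sigma, \tau \in \T$ are joined by a unique path in the tree, say $\sigma = v_0, v_1, \dots, v_k = \tau$ with $k = d(\sigma,\tau)$, whose ``highest'' vertex (closest to the root) is the most recent common ancestor $\sigma \wedge \tau$. Walking along this path, the steps from $\sigma$ up to $\sigma \wedge \tau$ are ``reversed'' parent-to-child steps, while the steps from $\sigma \wedge \tau$ down to $\tau$ are ordinary parent-to-child steps. The key point is that, because $X_0$ is initialized from the stationary distribution $\pi$ and $P$ is reversible, a reversed step has exactly the same transition law as a forward step: $\pi_i P_{ij} = \pi_j P_{ji}$ means that the pair $(X_{\sigma'}, X_\sigma)$ with $X_{\sigma'}$ distributed as $\pi$ has the same joint law as $(X_\sigma, X_{\sigma'})$ with $X_\sigma$ distributed as $\pi$.

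The main technical step is to show that the marginal law of $(X_{v_0}, X_{v_1}, \dots, X_{v_k})$, the restriction of the $(\T,P)$-walk to the path, is itself a stationary Markov chain with transition matrix $P$. For this I would use the Markov property of the tree-indexed process to integrate out all the vertices of $\T$ not on the path: conditioned on the values along the path, the rest of the tree contributes nothing to the joint law of the path vertices, so it suffices to understand the walk on the path alone. On the path, one shows by reversibility that each of the two ``arms'' emanating from $\sigma\wedge\tau$ is a stationary Markov chain (the arm going up uses the reversed-step $=$ forward-step identity), that the common vertex $X_{\sigma \wedge \tau}$ is distributed according to $\pi$ (since every node in a stationary tree-indexed walk has marginal $\pi$, which itself follows from $\pi P = \pi$), and that the two arms are conditionally independent given $X_{\sigma\wedge\tau}$ by the Markov property. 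Splicing the reversed up-arm with the forward down-arm at the stationary vertex $X_{\sigma\wedge\tau}$ then yields a genuine stationary Markov chain of length $k$, whose endpoints are precisely $X_\sigma$ and $X_\tau$. Hence $(X_\sigma, X_\tau) \stackrel{d}{=} (X(0), X(k))$, which is the claim.

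I expect the main obstacle to be bookkeeping the conditional independence correctly: one must be careful that ``splicing'' the up-arm and down-arm at $\sigma\wedge\tau$ really does reconstitute a Markov chain, i.e. that conditionally on $X_{\sigma\wedge\tau}$ the two arms are independent and each Markov, so that concatenation gives the right joint law on all $k+1$ path vertices. This is where the Markov property of the $\T$-indexed process (in the form $\pr(X_\sigma \mid X_{\sigma'}, X_\tau : \tau \in \D(\sigma)^c) = \pr(X_\sigma \mid X_{\sigma'})$) does the real work, and stating it carefully for the two sub-branches hanging off $\sigma\wedge\tau$ is the crux. Everything else — the single-step reversal identity and the fact that every vertex has marginal $\pi$ — is routine given Lemma \ref{lem:spec} and the definition of reversibility.
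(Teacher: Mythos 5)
Your proposal is correct and follows essentially the same route as the paper: both condition on the most recent common ancestor $\sigma\wedge\tau$, use the tree Markov property to get conditional independence of the two arms given $X_{\sigma\wedge\tau}$, and use reversibility (with stationary initialization) to turn the arm from $\sigma$ up to the ancestor into a forward chain. The paper simply computes the two-point marginal directly via $\pi_\ell \pr(X_\sigma = u\mid X_p=\ell)=\pi_u \pr(X_p=\ell\mid X_\sigma=u)$ and Chapman--Kolmogorov, whereas you phrase the same mechanics as the (slightly stronger, equally valid) claim that the whole path is a stationary $P$-chain.
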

\begin{proof}
Let $p = \sigma \wedge \tau$ be the most recent common ancestor of $\sigma$ and $\tau$.  By the reversibility of the process,
\begin{eqnarray*}
\pr(X_\sigma = u, X_\tau = v) &=& \sum_\ell \pr(X_\sigma= u, X_p = \ell, X_\tau = v)\\
&=& \sum_\ell \pi_\ell \pr(X_\sigma = u| X_p = \ell) \pr(X_\tau = v| X_p = \ell)\\
&=& \sum_\ell \pi_u \pr(X_p = \ell|X_\sigma = u) \pr(X_\tau = v| X_p = \ell)\\
&=& \sum_\ell \pi_u \pr( X(d(\sigma,p)) = \ell| X(0) = u) \pr( X(d(p,\tau) + d(\sigma,p)) = v| X(d(\sigma,p)) = \ell)\\
&=& \pr(X(0) =u,  X(d(\sigma, \tau)) = v).
\end{eqnarray*}
\end{proof}

Also, we require a fuller version of Lemma \ref{lem:tmp}, which comes from \cite{levin2009markov}.  

\begin{lemma} (Lemma 12.2 in \cite{levin2009markov}) \label{lem:spec}
Let $P$ be a reversible Markov transition matrix on the nodes in $G$ with respect to the stationary distribution $\pi$.  The eigenvectors of $P$, denoted as $f_1, \dots, f_{\gn}$, are real valued functions of the nodes $i \in G$ and orthonormal with respect to the inner product 
\begin{equation} \label{def:innerAppendix}
\langle f_a, f_b \rangle_\pi = \sum_{i \in G} f_a(i) f_b(i) \pi_i.
\end{equation}
If $\lambda$ is an eigenvalue of $P$, then $|\lambda|\le 1$.  The eigenfunction $f_1$ corresponding to the eigenvalue $1$ can be taken to be the constant vector $\textbf{1}$, in which case the probability of a transition from $i\in G$ to $j \in G$ in $t$ steps can be written as
\begin{equation}\label{eq:tsteps}
\pr(X(t) = j|X(0) = i) = P_{ij}^t =  \pi_j + \pi_j \sum_{\ell =2}^{\gn} \lambda_\ell^t f_\ell(i) f_\ell(j).
\end{equation}
\end{lemma}

The following is a proof of Theorem \ref{thm:var}.
\begin{proof} 
\begin{eqnarray*}
Var_{RDS}(\hat \mu) &=& \frac{1}{n^2} Var_{RDS}(\sum_{\tau \in \T}^N y(X_\tau)) \\
&=& \frac{1}{n^2}  \sum_{\sigma, \tau \in \T} Cov_{RDS}(y(X_\sigma), y(X_{\tau})).
\end{eqnarray*}

For ease of notation, let $t = d(\sigma, \tau)$.  From Lemma \ref{lem:rev} (and suppressing the $_{RDS}$ subscript),

\begin{eqnarray*}
Cov(y(X_\sigma), y(X_{\tau})) 
%&=& \E\left( (y(X_i) - \E_\pi(y(X_i))) \ (y(X_{j}) - \E_\pi(y(X_{j})))\right)\\
%&=& \E_\pi\left( y(X_i) \ y(X_{j}) \right) - \E_\pi(y(X_i) \E_\pi(y(X_{j}))) \\
%&=&
 = \E \left( y(X(0)) \ y( X(t))\right) -  \left( \E y( X(0))\right)^2.
\end{eqnarray*}

Using the spectral decomposition of $P$ (see Lemma \ref{lem:spec}), with the fact that $f_1$ is a constant vector and $\lambda_1 =1$ \citep{levin2009markov},
%The second term is equivalent to the terms from $Var(\hat \mu_{RS})$. For the first term, use $\mu = 0$ and facts about Markov chains from Chapter 12 of \cite{levin2009markov}.

\begin{eqnarray*}
\E\left( y(X(0)) y(X(t))\right) &=& \sum_{u,v \in G} y(u) y(v) \pr\left(X(0) = u, X(t) = v\right)\\
&=& \sum_{u,v \in G} y(u) y(v) \pi_u P^t(u,v)\\
&=& \sum_{u,v \in G} y(u) y(v) \pi_u \pi_v  \sum_{\ell=1}^{\gn} \lambda_\ell^t f_\ell(u) f_\ell(v) \\
&=& \sum_{u,v \in G} y(u) y(v) \pi_u \pi_v \left(1 +  \sum_{\ell=2}^{\gn} \lambda_\ell^t f_\ell(u) f_\ell(v)  \right)\\
&=& \left( \sum_{u\in G} y(u)  \pi_u\right)^2 +  \sum_{\ell=2}^{\gn} \lambda_\ell^t \left(\sum_{u \in G} y(u)  \pi_u   f_\ell(u) \right)^2\\
&=& \left(\E y( X(0))\right)^2 +  \sum_{\ell=2}^{\gn} \lambda_\ell^t  \langle y, f_\ell\rangle_\pi^2.
\end{eqnarray*}
Terms cancel.  So,
\begin{equation}
Cov(y(X_\sigma), y(X_{\tau}))= \sum_{\ell=2}^{\gn} \lambda_\ell^{d(\sigma, \tau)}  \langle y, f_\ell \rangle_\pi^2.
\end{equation}
Then,
\begin{eqnarray*}
Var_{RDS}(\hat \mu)  &=& n^{-2}  \sum_{\sigma, \tau \in \T} Cov(y(X_\sigma), y(X_\tau)) \\
 & = & n^{-2} \sum_{\sigma, \tau \in \T} \sum_{\ell=2}^{\gn} \lambda_\ell^{d(\sigma, \tau)}  \langle y, f_\ell \rangle_\pi^2 \\
 & = & n^{-2} \sum_{\ell=2}^{\gn} \langle y, f_\ell \rangle_\pi^2 \sum_{\sigma, \tau \in \T}  \lambda_\ell^{d(\sigma, \tau)}   \\
 & = & \sum_{\ell=2}^{\gn} \langle y, f_\ell \rangle_\pi^2 \G(\lambda_\ell) . 
\end{eqnarray*}

\end{proof}

%\section{Proof of Corollary \ref{cor:DEbounds}} \label{app:thresh}

%Corollary \ref{cor:varbounds} requires Lemmas \ref{lem:levin} and \ref{lem:thresh}.  First, a proof of Lemma \ref{lem:levin}.
%\begin{lemma} \label{lem:levin} For $\sigma^2 = Var_\pi(Y_0)$,
%\[\sigma^2 = \sum_{j=2}^{\gn}  \langle y, f_j\rangle_\pi^2 . \]
%\end{lemma}
%\begin{proof}
%This proof is given on page 342 of \cite{levin2009markov} and is repeated here for completeness. 
%\[\sum_{j=2}^{\gn}  \langle y, f_j\rangle_\pi^2  = \sum_{j=1}^{\gn}  \langle y, f_j\rangle_\pi^2 - (\E_\pi Y_0)^2 =  \E_\pi(Y_0^2) - (\E_\pi Y_0 )^2 = Var_\pi (Y_0).\]
%\end{proof}

%\begin{lemma} \label{lem:gpos}
%For any tree and any \tp with a reversible $P$,
%\[\G(\lambda_k) \ge 0 \mbox{ for all } k = 1, \dots, N.\]
%\end{lemma}
%The following is a proof of Lemma \ref{lem:gpos}.
%\begin{proof}
%Define $y = f_k$.  By Theorem \ref{thm:var}, 
%\[Var_{RDS}(\hat \mu)  =  \sum_{\ell=2}^{\gn} \langle y, f_\ell \rangle_\pi^2 \G(\lambda_\ell) = \G(\lambda_k).\]
%Moreover $Var_{RDS}(\hat \mu) \ge 0$. If $k=1$, then 
%\end{proof}

%\begin{eqnarray*}
%Var_{RDS}(\hat \mu)  &=&  \sum_{\ell=2}^{\gn} \langle y, f_\ell \rangle_\pi^2 \G(\lambda_\ell) \\
%&\le&  \G(\lambda_2) \sum_{\ell=2}^{\gn} \langle y, f_\ell \rangle_\pi^2  \\
%&=&  \G(\lambda_2) \sigma^2  
% \end{eqnarray*}

\section{Proof of Theorem 3.1} \label{sec:ProofCritical}

Sections \ref{sec:PrelimCritical}, \ref{sec:lowerBound}, and \ref{sec:upperBound} all give conditions on the topology of the tree $\T$.  Then, Section \ref{sec:GWP} shows that Galton-Watson trees almost surely have these topological properties. 

\subsection{Preliminaries to the proof} \label{sec:PrelimCritical}

The next corollary shows that if the outcome of interest $y$ correlates with the largest bottleneck in the network $f_2$, then the design effect is asymptotically proportional to $n \G_n(\lambda_2)$.  As such, the \tp has a bounded design effect if and only if $\G_n(\lambda_2) = O(n^{-1})$.   Recall that the eigenvalues are defined in descending absolute value, $|\lambda_1| \ge |\lambda_2| \ge \dots \ge |\lambda_N|$.
 
\begin{corollary} \label{cor:DEbounds}
Under the conditions of Theorem \ref{thm:var}, if $\lambda_2 > 0$, then
\[\rho_\pi^2(y, f_2) \ n  \G_n(\lambda_2)  \le DE(\hat \mu) \le n  \G_n(\lambda_2),\]
where $\rho_\pi^2(y, f_2) $ is the population correlation between $y$ and the second eigenvector of $P$,
\begin{equation}\label{def:rho}
\rho_\pi(y, f_2) = \sigma^{-1}\langle y,  f_2 \rangle_\pi,
\end{equation}
for $\sigma^2 = Var_{RDS}Y_0$ and $\langle \cdot, \cdot \rangle_\pi$ as defined in Equation \eqref{def:innerAppendix}.
\end{corollary}

The proof of Corollary \ref{cor:DEbounds}, uses two lemmas.
\begin{lemma} \label{lem:levin} For $\sigma^2 = Var_{RDS}(Y_0)$,
\[\sigma^2 = \sum_{j=2}^{\gn}  \langle y, f_j\rangle_\pi^2 . \]
\end{lemma}
This proof is given on page 342 of \cite{levin2009markov} and is repeated here for completeness. 
\begin{proof}
\[\sum_{j=2}^{\gn}  \langle y, f_j\rangle_\pi^2  = \sum_{j=1}^{\gn}  \langle y, f_j\rangle_\pi^2 - (\E_\pi Y_0)^2 =  \E_\pi(Y_0^2) - (\E_\pi Y_0 )^2 = Var_\pi (Y_0)= Var_{RDS} (Y_0).\]
\end{proof}
\begin{lemma} \label{lem:gpos}
For any \tp that satisfies the conditions of Theorem \ref{thm:var},
\[\G(\lambda_k) \ge 0 \mbox{ for all } k = 1, \dots, N.\]
\end{lemma}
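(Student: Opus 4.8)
The plan is to show that $\G(\lambda_k) = \E(\lambda_k^D)$ is nonnegative for every eigenvalue $\lambda_k$ of $P$, including the potentially negative ones. The difficulty is that $D = d(I,J)$ is a random variable taking all nonnegative integer values, so when $\lambda_k < 0$ the summands $\lambda_k^d \pr(D=d)$ alternate in sign and there is no termwise reason for the sum to be nonnegative. The key observation that rescues us is that $\G$ is not an arbitrary power series in $\lambda_k$: it is the probability generating function of a graph distance on a \emph{tree}, and tree distances have strong parity structure.

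First I would recall from the proof of Theorem \ref{thm:var} (Lemma \ref{lem:rev}) that for any two nodes $\sigma,\tau \in \T$ with $d(\sigma,\tau) = t$, the pair $(X_\sigma, X_\tau)$ has the same law as $(X(0), X(t))$ for a stationary Markov chain with transition matrix $P$. Then, rather than bounding $\G(\lambda_k)$ directly, I would go back to the covariance identity and use it in reverse. Define the test function $y = f_k$ itself, the $k$th eigenvector. By Lemma \ref{lem:levin} and orthonormality, $\langle f_k, f_\ell\rangle_\pi^2 = \mathbf{1}\{\ell = k\}$ for $\ell \ge 2$ (and $\langle f_k, f_1\rangle_\pi = 0$ since $f_1$ is constant and $k \ge 2$; the case $k=1$ is trivial since $\G(\lambda_1) = \G(1) = 1$). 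Plugging $y = f_k$ into Equation \eqref{eq:var} of Theorem \ref{thm:var} gives
\[
Var_{RDS}\!\left(\tfrac{1}{n}\sum_{\tau \in \T} f_k(X_\tau)\right) \;=\; \sum_{\ell=2}^{\gn} \langle f_k, f_\ell\rangle_\pi^2\, \G(\lambda_\ell) \;=\; \G(\lambda_k).
\]
Since the left-hand side is a variance, it is nonnegative, hence $\G(\lambda_k) \ge 0$. For $k=1$ one simply notes $\G(1)=\E(1^D)=1 \ge 0$ directly, since $\lambda_1 = 1$.

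The main obstacle, and the thing to state carefully, is the legitimacy of taking $y = f_k$ in Theorem \ref{thm:var}: the theorem is stated for a node feature $y: G \to \R$, and each eigenvector $f_k$ is exactly such a real-valued function of the nodes (this is asserted in Lemma \ref{lem:spec}), so the substitution is valid with no extra hypotheses beyond those already assumed (reversibility of $P$ and $|\lambda_2| < 1$). One should also double-check that the chain $\sigma^2 = Var_{RDS}Y_0$ used implicitly in the normalization plays no role here — it does not, because we are only using the raw variance formula \eqref{eq:var}, not the design-effect normalization. So the whole argument is a two-line application of an already-proved theorem to a cleverly chosen test function; the only content is recognizing that the quadratic form on the right of \eqref{eq:var} isolates a single term $\G(\lambda_k)$ when $y = f_k$, and that the left side is manifestly $\ge 0$.
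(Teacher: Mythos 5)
Your proposal is correct and is essentially identical to the paper's own proof: set $y = f_k$, note that orthonormality collapses the sum in Equation \eqref{eq:var} to the single term $\G(\lambda_k)$, which must be nonnegative as a variance, and handle $k=1$ separately via $\G(1)=1$. (The opening remark about parity structure of tree distances is a red herring you never actually use, but it does no harm.)
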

\begin{proof}
Define a pretend feature $y = f_k$, then by Theorem \ref{thm:var}, $\G(\lambda_k) = Var_{RDS}(\hat \mu)  \ge 0.$ 
%For $k=1$, $\G(1) = 1$.
\end{proof}

Now, a proof of Corollary \ref{cor:DEbounds}.
\begin{proof}
By the definition of the ordering, $|\lambda_1| \ge |\lambda_2| \ge \dots \ge |\lambda_N|$ and the assumption $\lambda_2 > 0$, it follows that $\lambda_2 \ge |\lambda_{\ell}|$ for $\ell > 2$.  This implies $\lambda_2^d > \lambda_\ell^d$ for any $d$.  It then follows that $\G(\lambda_2)  \ge\G(\lambda_\ell)$. So,
\[Var_{RDS}(\hat \mu)  =  \sum_{\ell=2}^{\gn} \langle y, f_\ell \rangle_\pi^2 \G(\lambda_\ell) \le  \G(\lambda_2) \sum_{\ell=2}^{\gn} \langle y, f_\ell \rangle_\pi^2  =  \G(\lambda_2) \sigma^2.  \]
Because $\G(\lambda_\ell) \ge 0$ for all $\ell$,
\[Var_{RDS}(\hat \mu)  =  \sum_{\ell=2}^{\gn} \langle y, f_\ell \rangle_\pi^2 \G(\lambda_\ell)  \ge  \langle y, f_2 \rangle_\pi^2 \G(\lambda_2).\]
To convert to $DE$, divide by $Var_\pi(\hat \mu) = Var_{RDS}(Y_0) /n = \sigma^2 / n$.  
\end{proof}

\subsection{Lower bounds for $\G(\lambda_2)$} \label{sec:lowerBound}

\begin{fact} \label{fact:Gbounds}
Select two nodes $I, J$ from $\T$ uniformly at random.   Define the random variable $D = d(I,J)$ to be the graph distance in $\T$ between $I$ and $J$. Define $\|J\| = d(0,J)$ to be the distance from $J$ to the root.  For $\lambda \in [0,1)$,
 \[\G(\lambda) \ge \lambda^{\E D}  \ge \max(\lambda^{d(\T)}, \lambda^{2 \E \|J\| }) \ge \min(\lambda^{d(\T)}, \lambda^{2 \E \|J\| }) \ge \lambda^{2 h(\T)},\]
 where $\E \|J\|$ is the average distance from the seed node, $d(\T)$ is the diameter of the $\T$, and $h(\T)$ is the height of the tree.
\end{fact}

%The following is a proof of Fact \ref{fact:Gbounds}.
\begin{proof}
The first inequality follows directly from Jensen's inequality.  The next inequalities use 
\[\E D \le \E (\|I\| +\|J\|) = 2 \E \|J\| \le 2 h(\T).\]
Also, notice that $\E D \le d(\T) \le 2 h(\T)$.
The result follows from the restriction that $|\lambda|<1$.
\end{proof}

%Empirical RDS papers often report $h(\T)$ and $\E \|J\|$ because they help to diagnose whether the process has reached the stationary distribution.  Fact \ref{fact:Gbounds} shows that they are also useful for lower bounding the variance.  However, the average pairwise distances $\E D$ or the entire function $\G(z)$ would be more informative. 

%Let $\beta = \lambda_2^{-2}$.  Fact \ref{fact:Gbounds} implies that if 
%\[d(\T_n) \le 2 \alpha \log_\beta n, \ \mbox{ or if, } \ h(\T_n) \le \alpha \log_\beta n,\]
%then
%\[\G(\lambda_2) \ge n^{-\alpha}.\]

%\end{equation}
%for $\beta = \lambda_2^{-2}$, then
%\[Var_{RDS}(\hat \mu)  \ge c n^{-(1-\epsilon)} + O(\G(\lambda_3) )\]
%for some constant $c$.
%\end{corollary}
%Note that $2 h(\T_n) \ge d(\T_n)$.  So, a sufficient condition for inequality \eqref{eq:dbound} is 
%= \log_m \beta \log_\beta n
%\[.\]

Define $\beta = 1/\lambda_2^2$.  
%Fact \ref{fact:Gbounds} shows that $\hat \mu$ has a growing design effect when $\T$ is an $m$-tree and $m >\beta$.  

$m$-trees provide good intuition for trees that grow at rate $m$.  If $\T$ is an $m$-tree, 
%with $m > \beta$, then 
then $h(\T) \le \log_m n $.  So,
\begin{equation} \label{eq:Galpha}
\G(\lambda_2) \ge  \lambda_2^{2 h(\T)} \ge  \lambda_2^{2 \log_m n} = n^{- \log_m \beta}.
\end{equation}
%where the second inequality follows from the fact that when $\T$ is an $m$-tree, $h(\T) \le \log_m n $.  
%for $\alpha = \log_m \beta $.
%This implies that the design effect grows when $m > \beta$.  
The next fact shows that the lower bound in Equation \eqref{eq:Galpha} is not tight when $m<\beta$.  
\begin{fact} \label{fact:oneovern}
When $\lambda>0$, $\G(\lambda) \ge n^{-1}.$
\end{fact}
%The following is a proof of Fact \ref{fact:oneovern}.
\begin{proof}
As before, denote $D = d(I,J)$, then
\[\G(\lambda) = \sum_{k = 0}^d\lambda^k\pr(D=k) \ge \lambda^0 \pr(D=0) = n^{-1}.\]
\end{proof}

Taking the maximum of these two lower bounds shows that for $m$-trees, 
%is $max(n^{-1}, n^{- \log_m \beta})$.  When 
the lower bound is $n^{-1}$ when $m<\beta$ and $n^{- \log_m \beta}$ when  $m > \beta$.  

%Together Facts \ref{fact:Gbounds} and \ref{fact:oneovern} show that when $m <\beta$, the design effect does not converge to zero, and when $m > \beta$ the design effect grows with $n$.  Said another way, when $m < \beta$, the estimators do not converge faster than the standard $\sqrt{n}$ rate, and when $m > \beta$, the estimators converge slower than the standard $\sqrt{n}$ rate.  These are lower bounds.
%%Above this bound, $\G(\lambda_2)$ does not converge at the standard rate. Under this regime, VH would have an unbounded design effect.  
The next section gives a matching upper bound under an additional ``balanced" condition on $\T$.

%
%Taken together, the lower bounds suggest a critical threshold for the convergence of $\G(\lambda_2)$ on $m$-trees.  For slower referral rate (i.e. $m < \beta $),  $\G$ is lower bounded by the standard rate of convergence, $\G(\lambda_2) \ge n^{-1}$.  However, higher referral rates (i.e. $m > \beta $) produce a sequence of $\G_n(\lambda_2)$ that do not obtain the standard rate.  In fact, $\G(\lambda_2) > n^{-\alpha}$ where $\alpha = \log_m \beta <1$. This critical threshold 
%%So, when $\T$ is an $m$-tree and $m>\lambda_2^{-2}$, the function $\G(\lambda_2)$ does not decay at the standard $n^{-1}$.  This suggests that there is a critical threshold at $m>\lambda_2^{-2}$.  
%will be confirmed with upper bounds on $\G(\lambda_2)$ in the next section.  

\subsection{Upper bound for $\G(\lambda_2)$} \label{sec:upperBound}
%Under the perfect $m$-tree, the lower bounds show that $\G(\lambda_2)$ converges a slow rate when $m> \lambda_2^{-2}$.  

Upper bounding $\G$ requires a more global assumption about the ``balance" of $\T$.   Note that $\G(\lambda_2)$ is small when $d(I,J)$ is likely to be big (i.e. $\hat \mu$ has a smaller variance  when most distances are large).  

To see the necessity of an additional condition for an upper bound,  suppose that a tree grows at rate $m>1$ and in every generation $t-1$, there is a single node that produces all the nodes in generation $t$.  Because $m>1$ there is a non-vanishing probability that $I$ and $J$ come from the final generation $h$.  On this event, $I$ and $J$ have the same parent and $D = d(I,J) = 2$.   As $h$ grows, $\G$ will not decay.  An upper bound that decays with the lower bounds, it is necessary to prevent this type of tree.   Define $\|I\| = d(0,I)$ to be the distance from the root to node $I$.  For  $\tau, \sigma \in \T$, define $\tau \wedge \sigma \in \T$  to be the most recent common ancestor of $\sigma$ and $\tau$.
%\footnote{$  \tau \wedge \sigma = arg max \|\gamma\| $ such that $\gamma \in 
The formula
\[d(I,J) = \|I\| + \|J\|- 2\|I\wedge J\|\]
shows that most pairwise distances $d(I,J)$ are large when $\| I \|$  is large for most nodes and when $\|I\wedge J\|$ is small for most pairs.  In essence, the balanced condition (which is defined below) ensures that $\|I\wedge J\|$ is small. 

For $\sigma \in \T$, define $\A(\sigma)$ as the set of ancestors of $\sigma$, that is the nodes in $\T$ that fall along the shortest path between $\sigma$ and the root (for convenience, include $\sigma \in \A(\sigma)$). 
%For $\tau \in \T$, 
%recall $\|\tau\| = d(0, \tau)$ and 
Define the descendants of $\tau \in \T$ in the $n$th generation as
\[\D_n(\tau) = \{\sigma: d(0,\sigma)=n \ \mbox{ and } \ \tau \in \A(\sigma)\}.\]

Because $0$ is the root node, $\D_n(0)$ contains all nodes in generation $n$ and $|\D_n(0)|$ is the number of nodes in generation $n$. A tree $\T$ \textbf{grows at rate $m$} if there exist positive constants 
$\cb$ and $\ct$ such that for all $n$,
\[\cb  \le \frac{|\D_n(0)|}{m^n} \le \ct.\]
Notice that this implies $\T$ is an infinite tree.  The results below study $\T_h$, the induced subgraph of $\T$ that is formed by all nodes  $\tau$ with $\|\tau\| \le h$.

Suppose that $\T$ grows at rate $m$.  For $\tau \in \T$ with $\|\tau\|=k$, define 
\[c_\tau = \sup_n \frac{|\D_n(\tau)|}{m^{n-k}}.\]
Because $|\D_n(\tau)| \le |\D_n(0)|$ and the tree is assumed to grow at rate $m$, these constants are finite; $c_\tau \le \ct m^k <\infty$.  However, under a sequence of $\tau_n$, $c_{\tau_n}$ could be unbounded.  A tree satisfies the \textbf{balanced assumption} if there exists a constant $c$ such that for all $n$,
\[|\D_n(0)|^{-1} \sum_{\|\tau\|= n} c_\tau^2 \le c < \infty.\]
That is, the second moment of the $c_\tau$'s is uniformly bounded across all generations.
For example, $m$-trees grow at rate $m$ and satisfy the balanced assumption because $c_\tau = 1$ for all $\tau$.  An assumption similar to the balanced condition has appeared previously; Proposition 3.3 in \cite{lyons1990} implies that a balanced tree is ``quasi-spherical.''

\begin{theorem} \label{thm:upperBound}
Let $\T$ be an infinite tree that grows at rate $m$.  Define $\T_h$ as the node induced subgraph of $\T$ that contains all nodes $\tau \in \T$ satisfying $\|\tau\| \le h$.  Define $\G_h$ as in Definition \ref{def:probgen} with tree $\T_h$. 
If $\T$ satisfies the balanced assumption, then 
\begin{equation} \label{eq:upperBound} \arraycolsep=1.4pt\def\arraystretch{1.4}
\G_{h}(\lambda_2) \le 
\left\{\begin{array}{ll}
c  (\log n) n^{-1} & \mbox{ if } m < \beta \\
c (\log n)^2 n^{-1}& \mbox{ if } m = \beta \\
c (\log n) n^{-\alpha} & \mbox{ if } m > \beta,
\end{array}\right.
\end{equation}
where $\beta = \lambda_2^{-2}$, $\alpha = \log_m \lambda_2^{-2}$, and $c$ is a constant that could depend on $m$ and $\lambda_2$, but is independent of $n$. 
\end{theorem}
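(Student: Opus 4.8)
The plan is to group the ordered pairs of tree nodes by their most recent common ancestor, bound the resulting ``subtree sums'' using the growth rate and the balanced assumption, and finish by evaluating two interleaved geometric series; throughout I will use $n\asymp m^{h}$ (immediate from $\cb m^{k}\le|\D_{k}(0)|\le\ct m^{k}$) to turn powers of $m$ and $\lambda_2$ into powers of $n$. First, since $\lambda_2^{d}\le|\lambda_2|^{d}$ for every integer $d\ge 0$, we have $\G_h(\lambda_2)\le\G_h(|\lambda_2|)$, so it suffices to treat the case $\lambda_2>0$. Writing $n=|\T_h|$, the starting point is the exact expression $n^{2}\,\G_h(\lambda_2)=\sum_{\sigma,\tau\in\T_h}\lambda_2^{\,d(\sigma,\tau)}$.

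The combinatorial heart of the argument is a common-ancestor decomposition. For $\rho\in\T_h$ set $S_\rho=\sum_{\sigma\in\D(\rho)\cap\T_h}\lambda_2^{\,d(\rho,\sigma)}$ and, for each child $\rho'$ of $\rho$, $T_{\rho'}=\sum_{\sigma\in\D(\rho')\cap\T_h}\lambda_2^{\,d(\rho,\sigma)}$, so that $S_\rho=1+\sum_{\rho'}T_{\rho'}$. When $\sigma\wedge\tau=\rho$ one has $d(\sigma,\tau)=d(\rho,\sigma)+d(\rho,\tau)$, and splitting the pairs by which child's subtree contains each of $\sigma,\tau$ (with the degenerate cases $\sigma=\rho$ or $\tau=\rho$) gives
\[\sum_{\sigma\wedge\tau=\rho}\lambda_2^{\,d(\sigma,\tau)}=S_\rho^{2}-\sum_{\rho'}T_{\rho'}^{2}\le S_\rho^{2},\]
hence $n^{2}\,\G_h(\lambda_2)\le\sum_{\rho\in\T_h}S_\rho^{2}$. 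For $\rho$ at depth $k$, the growth rate and the definition of $c_\rho$ yield $S_\rho=\sum_{j=k}^{h}|\D_j(\rho)|\,\lambda_2^{\,j-k}\le c_\rho\sum_{l=0}^{h-k}(m\lambda_2)^{l}$, so summing over the $k$th generation and invoking the balanced assumption together with $|\D_k(0)|\le\ct m^{k}$,
\[\sum_{\|\rho\|=k}S_\rho^{2}\le\Bigl(\textstyle\sum_{l=0}^{h-k}(m\lambda_2)^{l}\Bigr)^{2}\sum_{\|\rho\|=k}c_\rho^{2}\le c\,\ct\,m^{k}\Bigl(\textstyle\sum_{l=0}^{h-k}(m\lambda_2)^{l}\Bigr)^{2}.\]

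It then remains to sum $\sum_{k=0}^{h}m^{k}\bigl(\sum_{l=0}^{h-k}(m\lambda_2)^{l}\bigr)^{2}$ and divide by $n^{2}\asymp m^{2h}$. The inner geometric sum is $\Theta(1)$ when $m\lambda_2<1$, is $\Theta(h-k)$ when $m\lambda_2=1$, and is $\Theta\bigl((m\lambda_2)^{h-k}\bigr)$ when $m\lambda_2>1$; in the last (and generically dominant) regime the outer sum over $k$ is geometric with ratio $1/(m\lambda_2^{2})=\beta/m$, so its behavior is dictated by the sign of $m-\beta$: dominated by the $k=h$ term when $m<\beta$, spread evenly over all $\asymp h$ generations when $m=\beta$, dominated by the $k=0$ term when $m>\beta$. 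Feeding in $m^{h}\asymp n$ and $\lambda_2^{2h}\asymp n^{-\alpha}$ with $\alpha=\log_m\lambda_2^{-2}$ produces $\sum_\rho S_\rho^{2}\lesssim n$, $\lesssim n\log n$, and $\lesssim n^{2-\alpha}$ in the three cases respectively, which after dividing by $n^{2}$ gives (slightly sharper forms of) the three stated bounds; the residual logarithmic factors in the statement are harmless slack that one incurs by bounding each generation's contribution by its maximum and multiplying by the $h\asymp\log_m n$ generations, rather than summing the geometric series in $k$ exactly.

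I expect the main obstacle to be the bookkeeping in this final step: keeping the two interleaved geometric sums straight, correctly identifying which endpoint in $k$ dominates in each of the three regimes, and — the real crux — verifying that the balanced assumption is exactly the hypothesis needed, since it is what controls $\sum_{\|\rho\|=k}c_\rho^{2}$ uniformly over generations. A weaker (first-moment or supremum) control on the $c_\rho$ would not suffice: a single generation harboring one anomalously large subtree would already break the estimate, so the second-moment form of the balance condition must be used, and used generation by generation. For $m$-trees this is the trivial instance $c_\rho\equiv 1$, which is why they satisfy the hypotheses automatically.
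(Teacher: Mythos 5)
Your proof is correct, and it rests on the same two pillars as the paper's: a decomposition of pairs by their most recent common ancestor, and the use of the balanced assumption to control the second moment $\sum_{\|\rho\|=k}c_\rho^2$ generation by generation (you are right that this second-moment form is exactly what is needed and that a first-moment or sup bound would not do). The organization differs in a way worth noting. The paper first bounds the probability mass function $\pr(d(I,J)=k)$ by conditioning on $\|I\|,\|J\|$ and $\|I\wedge J\|$, factoring $\pr(\tau=I\wedge J\mid\cdots)\le\pr(\tau\in\A(I)\mid\cdots)\pr(\tau\in\A(J)\mid\cdots)$, and only then evaluates the generating function; along the way it uses $\sum_{j=0}^k c_{\tau,k-j}c_{\tau,j}\le k\,c_\tau^2$, which is the source of one of the $\log n$ factors in the stated bound. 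You instead bound the double sum $\sum_{\sigma,\tau}\lambda_2^{d(\sigma,\tau)}$ directly via the subtree generating functions $S_\rho$, using the exact identity $\sum_{\sigma\wedge\tau=\rho}\lambda_2^{d(\sigma,\tau)}=S_\rho^2-\sum_{\rho'}T_{\rho'}^2\le S_\rho^2$ and then $S_\rho\le c_\rho\sum_{l=0}^{h-k}(m\lambda_2)^l$. This sidesteps the intermediate factor of $k$ and, after the two geometric series are summed exactly rather than bounded by (number of terms) $\times$ (largest term), yields the slightly sharper rates $n^{-1}$, $n^{-1}\log n$, and $n^{-\alpha}$ in the three regimes, each of which implies the corresponding bound in \eqref{eq:upperBound}. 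One small bookkeeping point: in your "generically dominant" case analysis you should note explicitly that $m\lambda_2\le 1$ forces $m\le\lambda_2^{-1}<\beta$, so those subcases land in the first regime and are consistent with the $k=h$ endpoint dominating there; you gesture at this but it is the one place a reader could lose the thread of which series controls which regime.
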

%\end{LARGE}

The growth rate assumption implies that $\T_h$ has $n = O(m^h)$ nodes.  So, Fact \ref{fact:Gbounds} yields matching lower bounds; the $\beta$ threshold is identical and the rates differ only by $\log n$ terms.

The key to the proof of Theorem \ref{thm:upperBound} comes from upper bounding $\pr(d(I,J) = k)$, where $I$ and $J$ are nodes selected uniformly at random from $\T$.  First, condition on $\|I\|$ and $\|J\|$.  Then, $d(I,J) = \|I\| + \|J\|- 2\|I\wedge J\|$ is determined by $\|I\wedge J\|$.
 In order to use the fact that $I$ and $J$ are independent, 
\begin{eqnarray*}
\pr(\|I\wedge J\| = \ell \ | \ \|I\| = a, \|J\| = b) & = &
\sum_{\tau: |\tau|= \ell} \pr( \tau = I\wedge J  \ | \ \|I\| = a, \|J\| = b)\\ &\le&  
\sum_{\tau: |\tau|= \ell} \pr( \tau \in \mathscr{A}(I) \ | \ \|I\| = a) \pr(\tau \in \mathscr{A}(J)|  \|J\| = b).
\end{eqnarray*}
These terms are related to $c_\tau^2$. So, the balance condition provides a bound. Finally, the growth rate assumption provides bounds for $\pr(\|I\| = a)$.

The proof of Theorem \ref{thm:upperBound} uses the following fact about a finite geometric series:
\begin{equation}\label{eq:geoseries} 
\arraycolsep=1.4pt\def\arraystretch{1.4}
\sum_{k=0}^{2h} x^k = \frac{1 - x^{2h+1}}{1-x} 
 \le \left\{\begin{array}{ll}
(1-x)^{-1} & \mbox{ if } x < 1 \\
x^{2h+1} (x-1)^{-1} & \mbox{ if } x > 1.
\end{array}\right.
\end{equation}

The following is a proof of Theorem \ref{thm:upperBound}.
\begin{proof}
An upper bound on $\pr(d(I,J) = k)$ provides an upper bound on $\G_h(\lambda_2)$.
\begin{eqnarray*}
\pr(d(I,J) = k) 
&=& \sum_{j=0}^k \pr(d(I\wedge J, I) = k-j \cap d(I\wedge J, J) = j)\\
&=& \sum_{j=0}^k \sum_{\ell = 0}^{\lfloor h - k/2\rfloor} \pr(d(I\wedge J, I) = k-j \cap d(I\wedge J, J) = j \ | \ \|I\| = k-j + \ell, \|J\| = j + \ell) \\
&& \hspace{1in} \times  \ \pr( \|I\| = k-j + \ell) \pr( \|J\| = j + \ell).
\end{eqnarray*}
First, bound the terms on $\|I\|$ and $\|J\|$ with the growth rate assumption,
% what are a and b??? is this old notation that was replaced with \cb and \ct???
%\[\pr( \|I\| = k-j + \ell) \pr( \|J\| = j + \ell) \le a^2 b^2 m^{k+2\ell - 2h}.\]
\[\pr( \|I\| = k-j + \ell) \pr( \|J\| = j + \ell) \le \ct^2 m^{k+2\ell - 2h}.\]
To bound the $I \wedge J$ term, define
\[c_{\tau, n} = \frac{\mathscr{D}_{\|\tau\| + n}(\tau)}{m^n}.\]
A key idea in what follows is that $\tau = I\wedge J \implies \tau \in \A(I) \cap \A(J)$.  Then, because $I$ and $J$ are independent, this probability breaks apart into two terms.
\begin{eqnarray*}
&& \pr(d(I\wedge J, I) = k-j \ \cap \ d(I\wedge J, J) = j \ | \ \|I\| = k-j + \ell, \|J\| = j + \ell) \\
&=& \pr(\|I\wedge J\| = \ell \ | \ \|I\| = k-j + \ell, \|J\| = j + \ell)\\ & = &
\sum_{\tau: |\tau|= \ell} \pr( \tau = I\wedge J  \ | \ \|I\| = k-j + \ell, \|J\| = j + \ell)\\ &\le&  
\sum_{\tau: |\tau|= \ell} \pr( \tau \in \mathscr{A}(I) \ | \ \|I\| = k-j + \ell) \pr(\tau \in \mathscr{A}(J)|  \|J\| = j + \ell)\\ &=&  
\sum_{\tau: |\tau|= \ell} \frac{\mathscr{D}_{k-j +\ell}(\tau)}{\mathscr{D}_{k-j+\ell}(0)} \frac{\mathscr{D}_{j + \ell}(\tau)}{\mathscr{D}_{j+\ell}(0)}\\ &\le&  
%a^2 \sum_{\tau: |\tau|= \ell} c_{\tau, k-j} c_{\tau,j} m^{-2\ell}. % what is a???? I think this is old notation for 1/\cb.  
1/\cb^2 \sum_{\tau: |\tau|= \ell} c_{\tau, k-j} c_{\tau,j} m^{-2\ell}.
\end{eqnarray*}
By the definition of $c_\tau$,
\[\sum_{j=0}^k c_{\tau, k-j} c_{\tau,j} \le k c_\tau^2.\]
So,
\begin{eqnarray*}
\pr(d(I,J) = k) 
&\le& c \sum_{j=0}^k \sum_{\ell = 0}^{\lfloor h - k/2\rfloor} \sum_{\tau: |\tau|= \ell} c_{\tau, k-j} c_{\tau,j} m^{-2\ell} m^{k+2\ell- 2h}\\
&\le& c m^{k- 2h} k
 \sum_{\ell = 0}^{\lfloor h - k/2\rfloor} m^\ell (\mathscr{D}_\ell(0))^{-1} \sum_{\tau: |\tau|= \ell} 
 c_\tau^2.
\end{eqnarray*}
By the balanced assumption, there is a constant $c < \infty$ such that for all $\ell$,
\[(\mathscr{D}_\ell(0))^{-1} \sum_{\tau: |\tau|= \ell}  c_\tau^2 < c.\]
So, use Equation \eqref{eq:geoseries} and let the constant depend on $m$,
\begin{eqnarray*}
\pr(d(I,J) = k) &\le& c m^{k- 2h} k \sum_{\ell = 0}^{\lfloor h - k/2\rfloor} m^\ell  \\
&\le& c k m^{k- 2h}   m^{ h - k/2 + 1}  \\
&=& c k m^{k/2- h}. 
\end{eqnarray*}
By the growth rate assumption, $m^{-h} \le c n^{-1}$ and $h \le  c \log n$.  So,
\begin{eqnarray*}
\G_h(z) &=& \sum_{k= 0}^{2h} z^k \pr(d(I,J) = k) \\
&\le&  c \sum_{k= 0}^{2h} z^k k m^{k/2- h} \\
&\le&c n^{-1} \log n \sum_{k= 0}^{2h}  (\sqrt{m} z)^k .
%&=& c n^{-1} \log n \frac{(\sqrt{m} z)^{2h+1} -1}{\sqrt{m} z -1} \ \ \mbox{ for $\sqrt{m} z \ne 0$.}
\end{eqnarray*}
When $mz^2 =1$, the sum contributes $2h \le c \log n$ and the rate is $n^{-1} (\log n)^2$.  Using the fact about geometric series in Equation \eqref{eq:geoseries}, for $mz^2 \ne1$,
\[\G_h(z) \le c n^{-1} \log n \frac{(\sqrt{m} z)^{2h+1}}{\sqrt{m} z -1}.\]
When $mz^2 <1$, the leading term gives the rate because the fraction converges to a constant.      However, when $mz^2 >1$, the fraction explodes.
\[n^{-1}\frac{(\sqrt{m} z)^{2h+1}}{\sqrt{m} z -1} \le c n^{-1} (m z^2)^h\le c z^{2h} = c z^{2\log_m n} = c n^{2\log_m z} .\]

\end{proof}

\subsection{Galton-Watson trees satisfy the conditions of the above results} \label{sec:GWP}

Fact \ref{fact:Gbounds} shows that if $h(\T) < \log_m n$, then 
\[\G(\lambda_2) \ge n^{2\log_m\lambda_2}.\]
The Kesten-Stigum theorem shows $h(\T) \approx \log_m n$ holds for Galton-Watson trees with $\E(\xi) = m$.

\begin{theorem}\label{thm:ks} \citep{kesten1966limit} 
Suppose $\T$ is a random Galton-Watson tree.  Let $\xi$ be a single draw from the offspring distribution; presume that $m = \E(\xi) >1$ and $\E(\xi \log \xi) < \infty$. Conditioned on the survival of the Galton-Watson process, $\T$ grows at rate $m$, a.s..
\end{theorem}

See \cite{lyons1995conceptual} for a conceptual proof of the Kesten-Stigum Theorem.

Thus, under the conditions of Theorem \ref{thm:ks}, $\T_h$ has $n = O(m^h)$ nodes.  
As such, the function $\G_h$ built from $\T_h$  will have the same lower bound as $m$-trees (see the discussion after after Fact \ref{fact:Gbounds}).  A matching upper bound on $\G_h$ requires a fourth moment assumption on $\xi$.

%%\subsection{Galton-Watson $\T$}
%%Under the Galton-Watson  (GW) model for $\T$, each participant refers an iid number of participants.  This iid draw comes from the ``offspring distribution".  Let $\xi$ be a draw from this distribution and denote $\E \xi =m$.  
%\begin{theorem}\label{thm:gwp}
%Suppose $\T$ is sampled from a Galton-Watson branching process that is stopped after $h$ generations. Under this model, the sample size $N_h$ is random.  Let $m>1$ be the mean of the offspring distribution.  Condition on the survival of the GW process.  For large enough $h$, 
%\[\G_h(\lambda_2) \ge c_W N_h^{\alpha},\]
%where $\alpha = \log_m \lambda_2^2$ and $c_W>0$ is a ``founders effect" random variable that is independent of $h$ and described in the proof. 
%\end{theorem}
%The following is a proof of Theorem \ref{thm:gwp}.
%\begin{proof}
%Using the standard GW notation, let $Z_n$ be the number of nodes in $\T$ at distance $n$ from the root.  
%\[W_n = \frac{Z_n}{m^n}\]
%is a martingale and $\lim_n W_n = W$ exists \textit{a.s.}  
%\[\frac{N_h}{m^h} = \frac{\sum_{n \le h} Z_n }{m^h} > \frac{Z_h}{m^h} + \frac{Z_{h-1}}{m^h} \rightarrow W(1+1/m)\]
%By the Kesten-Stigum Theorem, conditioned on the survival of the GW, $W>0$. So, there exists a $b$ such that for all $h > b$, 
%\[\frac{Z_h}{m^h} + \frac{Z_{h-1}}{m^h} > W. \]
%Multiplying by $m^h$ and taking $\log$'s,
%\[\log_m N_h  > \log_m W+ h.\]
%Using fact \ref{fact:Gbounds}, 
%\[\G_h(\lambda_2) \ge \lambda_2^{2 h} \ge \lambda_2^{2\log_m N_h - 2\log_m W} = N_h^{\log_m \lambda_2^2} W^{- \log_m \lambda_2^2}\]
%\end{proof}

%\begin{LARGE}
\begin{lemma}\label{thm:gwp}
Suppose $\T$ is a random Galton-Watson tree.  Let $\xi$ be a single draw for the offspring distribution; presume that $m = \E(\xi) >1$ and $\E(\xi^4) < \infty$.
Conditioned on the survival of the Galton-Watson process, $\T$ satisfies the conditions of Theorem \ref{thm:upperBound} (i.e. it grows at rate $m$ and it is balanced, a.s.).
\end{lemma}
The proof relies on the fact that $c_\tau$ is the supremum of the standard Galton-Watson martingale. 
%Then, it relies upon the $L^p$ maximal inequality for martingales.  The full proof is contained in the appendix, Section \ref{app:gwp}. 
%\end{LARGE}
%
%
%
%
%
%
%XXXXXXX
%
%The following is a proof of Lemma \ref{thm:gwp}.
\begin{proof}
%\textbf{GW grows at rate $m$:} Using the standard GW notation, let $Z_n$ be the number of nodes in $\T$ at distance $n$ from the root.  
%\[W_n = \frac{Z_n}{m^n}\]
%is a nonnegative martingale.  Using $Var(\xi)<\infty$, it is a classical application of the Martingale Convergence Theorem (e.g. Example 5.4.3 in \cite{durrett}) to show that $\lim_n W_n = W$ exists \textit{a.s.}.\footnote{This also holds under weaker conditions.}  Conditioned on the survival of the process, $W>0$ by the Kesten-Stigum Theorem (e.g. \cite{kesten1966limit} or \cite{lyons1995conceptual} for a modern reference). Then, standard arguments imply that $\T$ will grow at rate $m$, conditioned on the survival of the process.  

%\textbf{GW is balanced:} 
Because trees that go extinct are balanced, it is not necessary to condition on survival.  The proof below shows that if $\T$ is generated from the Galton-Watson with a finite fourth moment, then it is balanced a.s..

Each node $\tau \in \T$ generates an identically distributed Galton-Watson tree below it.  Denote 
\[Z_n^\tau = |\D_{\|\tau\| + n}(\tau)|, \ \ \ W_n^\tau = \frac{Z_n^\tau}{m^n}, \ \ \ W_+^\tau = \sup_n \ W_n^\tau, \ \mbox{ and } \  W^\tau = \lim W_n^\tau.\]
Across all values of $\tau$, $W_+^\tau$ are identically distributed.  Moreover, within a single generation of the tree (i.e. $\tau \in \D_h(0)$), $W_+^\tau$ are independent.  The same holds for $Z_n^\tau, W_n^\tau$ and $W^\tau$.  So, dropping the superscript $\tau$ will correspond to a generic iid draw from the same distribution.

The values $W_+^\tau$ correspond to the $c_\tau$'s in the balanced assumption.  We wish to bound
\[|\D_h(0)|^{-1} \sum_{\|\tau\| = h} (W_+^\tau)^2 < C,\]
where $C$ is a random variable that does not depend on $h$.  

\begin{lemma} \label{lem:limW}
Under the conditions of the theorem, $\E W_+^4 <\infty$.  
\end{lemma}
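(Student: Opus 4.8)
The plan is to realize $W_+ = \sup_n W_n$ as the supremum of the classical Galton--Watson martingale $W_n = Z_n/m^n$ and then invoke an $L^p$ maximal inequality. First I would note that $(W_n)_{n\ge 0}$ is a nonnegative martingale with respect to the natural filtration $\F_n = \sigma(Z_0, \dots, Z_n)$: indeed $\E(Z_{n+1}\mid \F_n) = m Z_n$, so $\E(W_{n+1}\mid\F_n) = W_n$. Doob's $L^p$ maximal inequality with $p=4$ gives
\[
\E\Big(\sup_{n} W_n\Big)^4 \le \Big(\tfrac{4}{3}\Big)^4 \sup_n \E W_n^4,
\]
so it suffices to show $\sup_n \E W_n^4 < \infty$, i.e.\ that $(W_n)$ is bounded in $L^4$. (One should first argue $\sup_{n \le h} W_n \to W_+$ monotonically and apply monotone convergence, so that the maximal inequality applied to the finite-horizon maxima passes to the limit.)

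The main work is the uniform $L^4$ bound. I would prove $\sup_n \E W_n^4 < \infty$ by a recursion on the moments of $Z_n$. Writing $Z_{n+1} = \sum_{i=1}^{Z_n} \xi_i^{(n)}$ with $\xi_i^{(n)}$ i.i.d.\ copies of $\xi$ independent of $\F_n$, one expands $\E(Z_{n+1}^4 \mid \F_n)$ as a polynomial in $Z_n$ of degree $4$, whose coefficients involve $\E\xi, \dots, \E\xi^4$, all finite by hypothesis. Taking expectations yields a linear recursion
\[
\E Z_{n+1}^4 = m^4 \,\E Z_n^4 + (\text{lower-order terms in } \E Z_n^3, \E Z_n^2, \E Z_n),
\]
where the leading term comes from the ``all indices equal'' contribution $Z_n\,\E\xi^4$ only at order $1$, while the genuine order-$4$ term is $(\E\xi)^4 \binom{Z_n}{4}$-type, giving the $m^4$ factor. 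Dividing by $m^{4(n+1)}$ and using that $\E Z_n^j = O(m^{jn})$ for $j=1,2,3$ (proved by the same kind of recursion, inductively, starting from $\E Z_n = m^n$), one sees that $\E W_{n+1}^4 = \E W_n^4 + O(m^{-n})$ (the cross terms are damped because they scale like $m^{jn}/m^{4n}$ with $j<4$, hence are summable). Summing the geometric errors gives $\sup_n \E W_n^4 < \infty$.

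The step I expect to be the main obstacle is the bookkeeping in expanding $\E(Z_{n+1}^4\mid\F_n)$: one must carefully track which partitions of the four summation indices contribute at each power of $Z_n$ and confirm that the coefficient of $Z_n^4$ is exactly $(\E\xi)^4 = m^4$ (so the normalized recursion has leading coefficient $1$) while every other term, once divided by $m^{4(n+1)}$, is $O(m^{-cn})$ for some $c>0$ and hence summable. This is routine but is where an error would creep in; a cleaner alternative, which I would mention, is to use the known closed-form results for moments of supercritical Galton--Watson processes (e.g.\ in \cite{athreya1972branching}), from which $\sup_n \E W_n^p < \infty$ whenever $\E \xi^p < \infty$ follows directly, after which Doob's inequality finishes the proof.
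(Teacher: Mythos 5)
Your proposal is correct, and it matches the paper's proof in its first half: both reduce $\E W_+^4<\infty$ to $\sup_n \E W_n^4<\infty$ via monotone convergence on the finite-horizon maxima and Doob's $L^p$ maximal inequality with $p=4$. Where you diverge is in how the uniform $L^4$ bound on the martingale is obtained. The paper writes $W_n = \E(W\mid \F_n)$ (legitimate here because $\E\xi^4<\infty$ makes the Galton--Watson martingale uniformly integrable, hence closable), applies Jensen's inequality to get $\E W_n^4 \le \E W^4$, and then cites Bingham and Doney (1974) for the implication $\E\xi^4<\infty \Rightarrow \E W^4<\infty$. You instead prove $\sup_n \E W_n^4<\infty$ directly by the moment recursion $\E Z_{n+1}^4 = m^4\,\E Z_n^4 + (\text{terms of order } \E Z_n^j,\ j\le 3)$, using inductively that $\E Z_n^j = O(m^{jn})$ so that, after normalizing by $m^{4(n+1)}$, the error terms are $O(m^{-(4-j)n})$ and hence summable since $m>1$. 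Your route is entirely self-contained (the combinatorial bookkeeping you flag is routine and does go through: the coefficient of $Z_n^4$ in $\E(Z_{n+1}^4\mid\F_n)$ is exactly $(\E\xi)^4=m^4$, coming from the all-distinct-indices partition), at the cost of the longer computation; the paper's route is shorter but outsources the key moment fact to the literature and quietly uses closability of the martingale. Your closing remark that one could alternatively quote the classical moment results for supercritical processes essentially recovers the paper's strategy.
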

A proof of this lemma is given following the proof of the theorem.

Using Borel-Cantelli, the argument below will show that for $\mu = \E W_+^2$ and $\epsilon>0$, 
\begin{equation}
\label{eq:bc}
\pr\left(\left\{|\D_h(0)\|^{-1} \sum_{\|\tau\| = h} (W_+^\tau)^2 > \mu + \epsilon\right\}\ i.o. \mbox{ in $h$ }\right) = 0.
\end{equation}
As such, a.s. there exists a variable $C(\omega)$ that satisfies the balanced condition. 

Denote $Z_k = Z_k^0$.  Let $\F_{h}$ denote the filtration $\sigma(Z_0, Z_1, \dots, Z_h)$. 
By Chebyshev's inequality,
\begin{eqnarray}
&&\nonumber \sum_{h=1}^\infty\pr\left( \sum_{\|\tau\| = h} (W_+^\tau)^2 > Z_{h-1}(\mu + \epsilon)\right) \\
\nonumber  
&=&\sum_{h=1}^\infty \E \pr \left( \sum_{\|\tau\| = h} (W_+^\tau)^2 > Z_{h-1}(\mu + \epsilon) | \F_{h-1} \right) \\
\nonumber  
&\le &\sum_{h=1}^\infty \E \frac{ \textbf{1}\{Z_{h-1} \ne 0\} \sum_{\|\tau\| = h} \E \left( \left((W_+^\tau)^2 -\mu\right)^2 | \F_{h-1} \right)}{ (Z_{h-1})^2 \epsilon^2} \\
&= &\sum_{h=1}^\infty \E \frac{ \textbf{1}\{Z_{h-1} \ne 0\} \E \left( \left(W_+^2 -\mu\right)^2 | \F_{h-1} \right)}{ Z_{h-1} \epsilon^2}. \label{eq:lastone}
%XXX &\le& c \sum_{h=1}^\infty Var\left( (Z_{h-1}^0)^{-1} \sum_{\|\tau\| = h} (W_+^\tau)^2\right) \\
%&\le& c \sum_{h=1}^\infty \E\left( (Z_{h-1}^0)^{-2}\sum_{\|\tau\| = h} \E \left(((W_+^\tau)^2 - \mu )^2|\F_{h-1}\right)\right). \label{eq:lastone}
\end{eqnarray}
%
%The following calculations divide by $Z_{h-1}^0$.  On the event that $Z_{h-1}=0$ for some $h$, then  the balanced assumption is satisfied. As such, the following calculations could include an indicator for the event $Z_{h-1}^0>0$, but this is suppressed.  By Chebyshev's inequality,
%\begin{eqnarray}
%\nonumber \sum_{h=1}^\infty\pr\left((Z_{h-1}^0)^{-1} \sum_{\|\tau\| = h} (W_+^\tau)^2 > \mu + \epsilon\right) 
%\nonumber  &\le& c \sum_{h=1}^\infty Var\left( (Z_{h-1}^0)^{-1} \sum_{\|\tau\| = h} (W_+^\tau)^2\right) \\
%&\le& c \sum_{h=1}^\infty \E\left( (Z_{h-1}^0)^{-2}\sum_{\|\tau\| = h} \E \left(((W_+^\tau)^2 - \mu )^2|\F_{h-1}\right)\right). \label{eq:lastone}
%\end{eqnarray}
Then,
\[\E \left( \left(W_+^2 -\mu\right)^2 | \F_{h-1} \right) \le \E(W_+^4) <c \epsilon^2 <\infty.\]
Using this to bound Equation \eqref{eq:lastone},
\[\le c \sum_{h=1}^\infty \E\left( \textbf{1}\{Z_{h-1} \ne 0\} Z_{h-1}^{-1}\right).\]
By Theorem 1 in  \cite{ney2003harmonic}, there exists some constant $\rho<1$ and some other constant $c$ such that 
\[E\left(\textbf{1}\{Z_{h} \ne 0\} Z_{h}^{-1}\right) \le c \rho^h.\]
Because this is a summable sequence, Borel-Cantelli implies the desired result.

\end{proof}

The following is a proof of Lemma \ref{lem:limW}.
\begin{proof}
Define 
\[W_{+,n} = \max_{0\le m \le n} W_m.\]
By the Monotone Convergence Theorem, $\E W_{+,n}^4 \rightarrow \E W_{+}^4$.  So, it is enough to show that \\ $\sup_n \E W_{+,n}^4 <\infty$.

By the $L^p$ maximum inequality (e.g. Theorem 5.4.3 in \cite{durrett}), $W_n = \E(W|\F_{n})$, and Jensen's inequality,
\[\E W_{+,n}^4 \le c \E W_{n}^4 = c \E (\E(W|\F_{n})^4) \le c \E(W^4).\]
%By the triangle inequality,
%\[\E W_{n}^4 \le \E |W_{n} - W|^4 + \E W^4.\]
%XXXXX Proposition 1.3 in \cite{liu2001local} shows that $m>1$ and $\E \xi^4<\infty$ imply that $\sup_n \E W_{n}^4 < \infty$, yielding the desired result.
%To see this, use 
%Moreover, he result in 
\cite{bingham1974asymptotic}  shows that $\E(\xi^4)<\infty$ implies $\E (W^4) <\infty$, concluding the proof.
%  By Jensen's inequality, $\E W_n^4$ is an increasing sequence.  Using the $L^p$ maximum inequality for Martingales (e.g. Theorem 5.4.3 in \cite{durrett}) and the monotone convergence theorem, 
%\[\E (W_+^\tau)^4 \le c \E (W^\tau)^4 <\infty.\]
%
%
\end{proof}

Next a proof of Theorem \ref{thm:critical}.

\begin{proof}  
First, a proof of the upper bound.  From Corollary \ref{cor:DEbounds}, $DE \le n \G_h(\lambda_2)$.  By the Kesten-Stigum Theorem, $\T$ grows at rate $m$. From Lemma  \ref{thm:gwp}, $\T$ is balanced.  So, Theorem \ref{thm:upperBound} gives upper bounds for $\G_h$.  Multiplying the bounds by $n$ yields the upper bound on $DE$ given in Equation \eqref{eq:critical}.  

For the lower bound, $DE \ge c n \G_h(\lambda_2)$ for a generic positive constant, $c > 0$.  By Fact \ref{fact:Gbounds}, $\G_h(\lambda_2) \ge \lambda_2^{2h}$.  By the Kesten-Stigum Theorem, $\T$ grows at rate $m$.  So, $n \ge \cb \sum_{k = 0}^h m^k \ge \cb m^h$.  So, $h\le \log_m n - c$.  Performing the algebra analogous to Equation \eqref{eq:Galpha}, yields $\G_h(\lambda_2) \ge c n^{- \log_m \lambda_2^{-2}}$. Multiplying by $n$ and combining this with Fact \ref{fact:oneovern} yields the lower bound.
\end{proof}

\section{Sampling with-replacement results in Section 4} \label{app:replacement}

The following is a proof of Proposition \ref{prop:lowerRn}.

\begin{proof}
% with stationary distribution $\pi_i \propto deg(i)$.  Define $\bar d =\frac{1}{n} \sum_{i \in G} deg(i)$.
%\[\pi_i = \frac{deg(i) }{\sum_j deg(j)} = \frac{deg(i)}{N \bar d} \le \frac{D}{N \bar d}.\]
%Rearranging terms,
%\[\frac{1}{deg(i)} \ge \frac{c}{\bar d}.\]
Let $\sigma'' \in \T$ denote a node that is distance two away from $\sigma \in \T$.  Let $\sigma' \in \T$ be the intermediate node between $\sigma$ and $\sigma''$.    Because $G$ is undirected and $P$ is a simple random walk,  $P$ is reversible.  So, the direction of the edges between $\sigma, \sigma',$ and $\sigma''$ does not matter.
% to denote the parent of node $\sigma \in \T$ and $\sigma''$ to denote the parent of $\sigma'$, 
\[\E(R_n) = \sum_{\sigma \ne \tau} \pr(X_\sigma = X_\tau) \ge \sum_{\sigma} \pr(X_\sigma = X_{\sigma''}) = \sum_\sigma \E\frac{1}{deg(X_{\sigma'})} \ge \sum_\sigma \frac{1}{D} = \frac{n}{D}.\]
%Define $c' = c/D$.
\end{proof}

The following is a proof of Theorem \ref{thm:upperRn}

\begin{proof}
Let $tr(P)$ denote the trace of $P$.
\[\pr(X_\sigma = X_\tau) 
= \sum_{i \in G} \pi_i \pr( X_\tau = i|X_\sigma = i) = \sum_{i \in G} \pi_i P_{ii}^{d(\sigma, \tau)} \le c N^{-1} tr(P^{d(\sigma, \tau)}) = c N^{-1} \sum_\ell \lambda_\ell^{d(\sigma, \tau)}\]

%
%Using properties of the trace function, 
%\begin{eqnarray}
%\pr(X_\sigma = X_\tau)
%&=& \sum_{i \in G} \pr(X_\sigma = i) \pr( X_\tau = i|X_\sigma = i) \\
%&=&  \sum_{i \in G} \pi_i P_{ii}^{d(\sigma, \tau)} \\
%&\le & cN^{-1}    tr(P^{d(\sigma, \tau)}) \\
%&=& c N^{-1} \sum_\ell \lambda_\ell^{d(\sigma, \tau)}.
%\end{eqnarray}
%the expected value of $R_n$ relates to quantities studied in the previous sections. 
%Then, suppose that $\pi_i \le c/N$ for all $i \in G$, 
Then,
\begin{eqnarray}
\E(R_n) &=& \sum_{\sigma \ne \tau}  \pr(X_\sigma = X_\tau)\\
&\le & c N^{-1} \sum_\ell \sum_{\sigma \ne \tau}   \lambda_\ell^{d(\sigma, \tau)}\\
&=& c N^{-1} \sum_\ell ( n^2 \G(\lambda_\ell) - n)\\
&=& c N^{-1} n^2\left( \G(\lambda_1) +  \sum_{\ell \in \mathscr{A}}   \G(\lambda_\ell) +   \sum_{\ell \in \mathscr{B}}    \G(\lambda_\ell) - c N/n \right),
\end{eqnarray}
where 
\[\mathscr{A} = \{ \ell > 1 :|\lambda_\ell| \ge m^{-1/2}\} \mbox{ and } \mathscr{B} = \{\lambda :|\lambda_\ell| < m^{-1/2}\}.\]
From properties of the Markov transition matrix, $\lambda_1 =1$.  So, $\G(\lambda_1) =1$.   By assumption (2), $|\mathscr{A}| \le k$ for some constant $k$. By Theorem \ref{thm:upperBound}, $\ell \in \mathscr{A}$ implies $\G(\lambda_\ell) = O( (\log n) n^{-\alpha}) $ for $\alpha = \log_m \lambda_2^{-2} > 0$.  Similarly, $\ell \in \mathscr{B}$ implies $\G(\lambda_\ell) = O( (\log n) n^{-1}) $.   Substituting these values, 
\[\E(R_n) \le \frac{cn^2}{N} +  k  (\log n)\frac{cn^{2-\alpha}}{N}  +   c (\log n)n . \]
By assumption (3), the first two terms converge to zero, leaving the third term which yields the result.
\end{proof}

\section{Simulation results; the widths of the confidence intervals} \label{sec:widths}

Figure \ref{fig:simWidth} examines the widths of the bootstrapped confidence intervals as a function of the sample size.   Using the class of simulation settings specified in Subsection \ref{sec:simsettings}, each point in  Figure \ref{fig:simWidth} represents an average over 501 simulations with $\lambda_2\approx.82$.   In all four panels, the horizontal axis is the sample size (with the spacing determined by the log scale). The top panels display the results for \texttt{aligned} $y$ and $z$.  The bottom panels give the results for \texttt{correlated} $y$ and $z$.  In the left two panels, the vertical axis is the width of the confidence interval (also on the log scale).  These panels include an additional line for the $5$th and $95$th percentile of the point estimate $\hat \mu_{VH}$ (over the 501 replicates); this line is labeled as the ``truth'' in the legend.  In the right two panels, the vertical axis represents the ratio of the width of the bootstrapped interval over the width of the truth.  

Figure \ref{fig:simWidth} shows that as the sample size increases, all of the widths decrease.  However, this simulation setting exceeds the critical threshold.  So, the width of the true interval does not decay at rate $O(1/\sqrt{n})$.  \atb and \utb detect this slower rate of convergence.  In the right panels, these lines are flat.  \achb does not detect this slower rate of convergence.  So, in the right panel, the line for \achb is decreasing.      In the top panels (\texttt{aligned} $y$), \ssb performs well.  However, in the bottom panel (\texttt{correlated} $y$), it contracts with \achbns.

\begin{figure}[h] %  figure placement: here, top, bottom, or page
   \centering
   \includegraphics[width=5in]{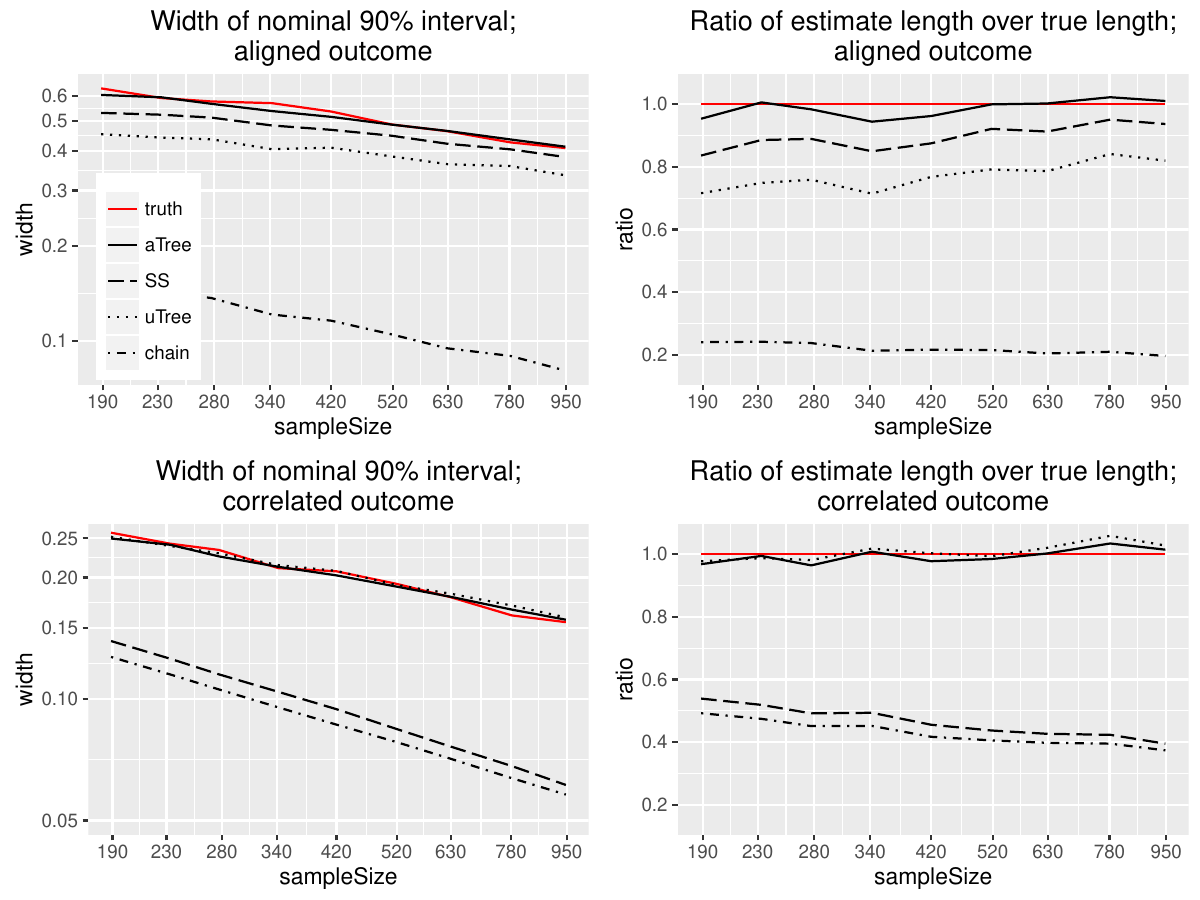} 
   \caption{In the top panels, the outcome $y$ is \texttt{aligned} with the referral bottleneck $z$.  In the bottom panels, the outcome $y$ is \texttt{correlated} with the referral bottleneck $z$.  \atb produces the widest confidence intervals.  \achb produces the narrowest confidence intervals.     In the left panels, the red is the distance between the $5$th and $95\%$ percentiles of the distribution of $\hat \mu_{VH}$ over the 501 replicates.  Because this is not a bootstrap interval, this is referred to as the truth.  In the right panels, the widths of the bootstrap intervals are divided by the width of the true line.  
   In the bottom right panel, \achb and \ssb have downward sloping lines, indicating that they are contracting more quickly than the truth.
   }
   \label{fig:simWidth}
\end{figure}

\begin{figure}[h] %  figure placement: here, top, bottom, or page
   \centering
   \includegraphics[width=4in]{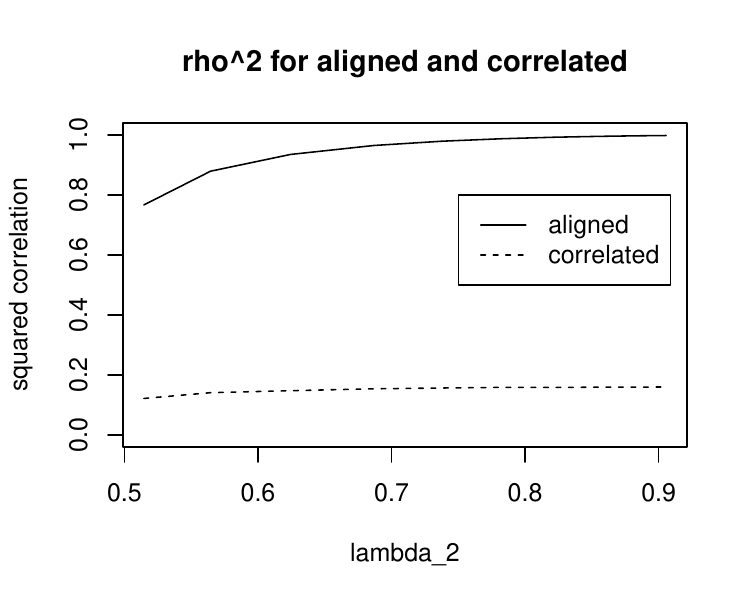} 
   \caption{  
  This figure plots  $\rho_\pi^2(y,f_2)$ as defined in
   Equation \eqref{def:rho}. This figure plots the value of $\rho_\pi^2(y,f_2)$ for both the \texttt{aligned} and \texttt{correlated} simulations.  The horizontal  axis corresponds to the different values of $\lambda_2$ examined in Figures  \ref{fig:2aligned}, \ref{fig:2correlated}, and \ref{fig:simWidth}. The \texttt{aligned} simulation has a value of $\rho_\pi^2(y,f_2)$ close to one, while the \texttt{correlated} simulation setting has a value of $\rho_\pi^2(y,f_2)$ around $.15$.  
%   Because $f_2$ is random, so is the exact value of $\rho_\pi^2(y,f_2)$.  
   }
   \label{fig:squaredCorrelation}
\end{figure}

\section{Bibliography}
\bibliographystyle{abbrvnat}
\bibliography{TV-references}

\end{document}